\documentclass[12pt,reqno]{amsart}
\usepackage{amssymb,mathrsfs,centernot,color}
\usepackage{hyperref}

\newcommand{\nw}{\newcommand}

\nw{\Blue}[1]{{\color{blue} #1}}
\nw{\Red}[1]{{\color{red} #1}}

\nw{\B}{\mathcal{B}}
\nw{\D}{\mathcal{D}}
\nw{\E}{\mathcal{E}}
\nw{\F}{\mathcal{F}}
\nw{\T}{\mathcal{T}}
\nw{\A}{\mathcal{A}}
\nw{\M}{\mathcal{M}}
\nw{\X}{\mathcal{X}}
\nw{\Y}{\mathcal{Y}}
\nw{\cL}{\mathcal{L}}
\nw{\cS}{\mathcal{S}}
\nw{\cH}{\mathcal{H}}
\nw{\cP}{\mathcal{P}}
\nw{\cQ}{\mathcal{Q}}
\nw{\cN}{\mathcal{N}}
\nw{\cZ}{\mathcal{Z}}

\nw{\C}{\mathbb{C}}
\nw{\N}{\mathbb{N}}
\nw{\R}{\mathbb{R}}
\nw{\Z}{\mathbb{Z}}
\nw{\Q}{\mathbb{Q}}

\nw{\al}{\alpha}
\nw{\be}{\beta}
\nw{\ga}{\gamma}
\nw{\de}{\delta}
\nw{\e}{\varepsilon}
\nw{\fy}{\varphi}
\nw{\om}{\omega}
\nw{\la}{\lambda}
\nw{\te}{\theta}
\nw{\s}{\sigma}
\nw{\ta}{\tau}
\nw{\ka}{\kappa}
\nw{\x}{\xi}
\nw{\y}{\eta}
\nw{\z}{\zeta}
\nw{\rh}{\rho}
\nw{\ro}{\rho}
\nw{\up}{\upsilon}

\nw{\De}{\Delta}
\nw{\Om}{\Omega}
\nw{\Ga}{\Gamma}
\nw{\La}{\Lambda}

\nw{\p}{\partial}
\nw{\na}{\nabla}
\nw{\Cu}{\bigcup}
\nw{\Ca}{\bigcap}
\nw{\Du}{\bigsqcup}
\nw{\dup}{\sqcup}
\nw{\re}{\mathop{\mathrm{Re}}}
\nw{\im}{\mathop{\mathrm{Im}}}
\nw{\weak}{\operatorname{w-}}
\nw{\weakto}{\rightharpoonup}
\nw{\supp}{\operatorname{supp}}
\nw{\sign}{\operatorname{sign}}
\nw{\dist}{\operatorname{dist}}
\nw{\sech}{\operatorname{sech}}
\nw{\Ker}{\operatorname{Ker}}
\nw{\Span}{\operatorname{span}}
\nw{\Str}{\operatorname{Str}}

\nw{\lec}{\lesssim}
\nw{\gec}{\gtrsim}
\nw{\slec}{\,\lesssim\,}
\nw{\sgec}{\,\gtrsim\,}
\nw{\follows}{\Longleftarrow}
\nw{\etc}{,\ldots,}
\nw{\ale}{\ |\!\!\!\le}
\nw{\alec}{\ |\!\!\!\lec}

\nw{\I}{\infty}
\nw{\da}{\dagger}
\nw{\empt}{\varnothing}

\nw{\ti}{\widetilde}
\nw{\ba}{\overline}
\nw{\ha}{\widehat}
\nw{\U}{\underline}
\nw{\Lim}{\lim\limits}
\nw{\Sum}{\sum\limits}

\nw{\Br}[1]{\left\{#1\right\}}
\nw{\BR}[1]{\left[#1\right]}
\nw{\LR}[1]{{\langle #1 \rangle}}
\nw{\fracd}[2]{\left(\frac{#1}{#2}\right)}
\nw{\tf}{\tfrac}
\nw{\abs}[1]{\left|#1\right|}
\nw{\diff}[1]{{\triangleleft #1}}
\nw{\pa}{\triangleright}

\nw{\oto}[1]{\overset{#1}{\longrightarrow}}
\nw{\com}[1]{\text{ (#1)}}
\nw{\tand}{\ \text{ and }\ }
\nw{\tor}{\ \text{ or }\ }
\nw{\otw}{(\text{otherwise})}
\nw{\IN}[1]{\text{ in }#1}
\nw{\Bdd}[1]{\text{ bounded in }#1}

\nw{\sidenote}[1]{\marginpar[\raggedleft\tiny #1]{\raggedright\tiny #1}}
\nw{\Del}[1]{}
\nw{\CAS}[1]{\begin{cases} #1 \end{cases}}
\nw{\mat}[1]{\begin{pmatrix} #1 \end{pmatrix}}
\nw{\smat}[1]{\left[\begin{smallmatrix} #1 \end{smallmatrix}\right]}
\nw{\EQ}[1]{\begin{equation}\begin{split} #1 \end{split}\end{equation}}
\nw{\EQN}[1]{\begin{equation*}\begin{split} #1 \end{split}\end{equation*}}
\nw{\pt}{&}
\nw{\pr}{\\ &}
\nw{\pq}{\quad}
\nw{\pQ}{\qquad}
\nw{\pQQ}{\qquad\qquad}
\nw{\pn}{}
\nw{\prq}{\\ &\quad}
\nw{\prQ}{\\ &\qquad}
\nw{\prQQ}{\\ &\qquad\qquad}
\nw{\EN}[1]{\begin{enumerate} #1 \end{enumerate}}
\nw{\ENI}[1]{{\begin{enumerate} #1 \end{enumerate}}}
\nw{\ENA}[1]{{\begin{enumerate} #1 \end{enumerate}}}

\numberwithin{equation}{section}

\newtheorem{thm}{Theorem}[section]
\newtheorem{cor}[thm]{Corollary}
\newtheorem{lem}[thm]{Lemma}

\theoremstyle{remark}
\newtheorem{rem}{Remark}
\newtheorem{defn}{Definition}

\nw{\cC}{\mathcal{C}}
\nw{\sC}{\mathscr{C}}
\nw{\bS}{\mathbb{S}}
\nw{\cR}{\mathcal{R}}
\nw{\bA}{\mathbb{A}}
\nw{\diam}{\operatorname{diam}}
\nw{\bB}{\mathbb{B}}
\nw{\sT}{\mathscr{T}}
\nw{\err}{\operatorname{err}}

\begin{document}

\title[Scattering for 4D Zakharov system]{Scattering for the 4D Zakharov system below the ground  state}

\author[T.~Candy]{Timothy Candy}
\address[T.~Candy]{Department of Mathematics and Statistics, University of Otago, PO Box 56, Dunedin 9054, New Zealand}
\email{tim.candy@otago.ac.nz}

\author[K.~Nakanishi]{Kenji Nakanishi}
\address[K.~Nakanishi]{Research Institute for Mathematical Sciences, Kyoto University, Kyoto 606-8502, Japan}
\email{kenji@kurims.kyoto-u.ac.jp}

\thanks{This work was partly supported by the Marsden Fund Council grant 19-UOO-142, managed by the Royal Society Te
Ap\={a}rangi (T.C.) and  JSPS KAKENHI Grant Numbers 22H01132 and 23K22403 (K.Z.).}

\begin{abstract}
For the Zakharov system in four space dimensions, we prove that all solutions inside the potential well of the ground states
are global and scattering in the energy space, with no other restriction such as symmetry.
The proof has already been reduced by \cite{C} to ruling out the existence of a minimal non-scattering solution that is precompact along some trajectory.
This paper carries out the final step in the proof, namely we exclude the possibility of precompact solutions inside the potential well by combining two distinct arguments depending on the motion of trajectory.
\end{abstract}

\keywords{Zakharov system, scattering, global well-posedness, ground state}

\subjclass{Primary: 35Q55. Secondary: 35B40, 35L70}

\maketitle


\section{Introduction}

The Zakharov system is a coupled Schr\"{o}dinger–wave system introduced by Zakharov \cite{Z} as a model for Langmuir turbulence in plasma. The model takes the form
\EQ{\label{eqn:Zak u n}
 \CAS{i\p_t u-\De u = nu, & u(t,x):\R^{1+d}\to\C,\\
 \p_t^2 n/\al^2-\De n = -\De |u|^2, & n(t,x):\R^{1+d}\to\R,}}
where $u$ and $n$ describe the envelope of the electric oscillation and the ion density fluctuation, respectively,  and $\al>0$ is the ion sound speed.
In the subsonic limit $\al\to\I$, the Zakharov system converges to the nonlinear Schr\"odinger equation (NLS):
\EQ{\label{eqn:NLS}
 i\p_t u - \De u = |u|^2 u,}
see for instance \cite{Mas08}. Solutions to the Zakharov system and the NLS equation exhibit similar properties, but the interaction between the two different dispersions relations (Schr\"odinger and wave) in \eqref{eqn:Zak u n} induces various differences and significant technical challenges.

The goal of this paper is to establish that the scattering criterion for the energy-critical NLS obtained in \cite{D, KM} extends to the Zakharov system. For simplicity, fixing $\al=1$ and letting $D:=\sqrt{-\De}$ and $N:=n-iD^{-1}\p_t n$,
we work with the equivalent first order formulation
\EQ{\label{eqn:Zak1}
 \CAS{i\p_t u - \Delta u = \re (N) u, \\
 i\p_t N + D N = D |u|^2,}
}
for which the conserved energy (Hamiltonian) is written as
\EQ{ \label{def EZ}
 \pt E_Z(u,N):=\int_{\R^d}\tf{|\na u|^2-\re(N)|u|^2}{2}+\tf{|N|^2}{4}dx.}
Hence we naturally consider initial data in the energy class
\EQ{
 (u(0), N(0))\in (H^1\times L^2)(\R^d), }
where the H\"older inequality $\int n|u|^2dx \le \|N\|_{L^2}\|u\|_{L^4}^2$
and the Sobolev embedding $H^1(\R^d)\subset L^p(\R^d)$ for $2\le p\le\tf{2d}{d-2}$
indicate that $d=4$ is the energy critical dimension,
while dimensions $d\le 3$ are subcritical, in the same way as the NLS \eqref{eqn:NLS}.

The local wellposedness of the Zakharov equation in the energy space for $d\le 3$, together with global existence for small data, was established in \cite{BC}
as an early application of the $X^{s,b}$ spaces. In $d=4$, local well-posedness was first proved in \cite{BGHN} using a compactness argument, which was improved in \cite{CHN1} to a contraction argument with suitable modification of $X^{s,b}$.

The scattering for small initial data was first proved in \cite{GN1} in the 3D radial case. In the non-radial setting with $d=3$, small data scattering has been obtained in a weighted space \cite{HPS} and in the energy space with angular derivative \cite{GLNW}. It remains an open problem to prove small data scattering in the whole energy space or any translation-invariant space when $d=3$, and in fact in any function space for $d\le 2$.  In contrast, in the energy critical case $d=4$, small-data scattering was obtained in parallel to the local wellposedness theory in \cite{BGHN}.

For larger data, following the Kenig-Merle method \cite{KM}, in $d=3$ the small-data result of \cite{GN1} was extended in \cite{GNW} to the dichotomy that all radial solutions below the ground state either scatter or grow-up (blow-up or become unbounded in the energy norm).  The same dichotomy was proven in \cite{GN2} in the 4D radial case, where the Kenig-Merle argument was used to establish a uniform global Strichartz estimate for the Schr\"odinger equation with a free wave potential. In the energy critical case $d=4$, the initial condition for scattering is given as
\EQ{\label{eqn:pot well}
 E_Z(u(0),N(0)) < E_Z(W,W^2)=E_S(W), \pq \|N(0)\|_{L^2}<\|W^2\|_{L^2},}
where $W(x):=\tf{1}{1+|x|^2/8}$ is the ground state or the Aubin-Talenti solution to NLS
    \EQ{ - \Delta W = W^3 }
and $E_S(f) = \int_{\R^4} \frac{1}{2} |\nabla f|^2 - \frac{1}{4} |f|^4 dx$ denotes the conserved energy for the NLS equation \eqref{eqn:NLS}. The condition that the data must lie in the potential well \eqref{eqn:pot well} ensures that the energy $E_Z$ is coercive, and is the Zakharov version of the NLS condition $E_S(u(0)) < E_S(W)$, $\| \nabla u(0) \|_{L^2} < \| W^2 \|_{L^2}$ appearing in \cite{KM}.

The uniform global Strichartz estimate was extended in \cite{CHN2} to the non-radial case, proving the global wellposedness inside the potential well \eqref{eqn:pot well}.
Further improving the bilinear estimates for dispersive wave components, \cite{C} proceeded to the final step of the Kenig-Merle method in the 4D non-radial case,
proving: If there is any non-scattering solution in the region \eqref{eqn:pot well}, then there is a minimal one, and its trajectory in the energy space is precompact modulo translations in space. We call precompact solutions \emph{critical elements}.

\begin{defn}[Critical Element]\label{defn:crit elements}
A solution $(u,N)\in C(\R;(H^1\times L^2)(\R^4))$ to \eqref{eqn:Zak1} with\footnote{The condition $u\in L^2_{t,loc} W^{\frac{1}{2}, 4}_x$ is only to ensure that the concept of a `solution' to \eqref{eqn:Zak1} is well-defined and otherwise plays no role in what follows.}
 $u\in L^2_{t,loc} W^{\frac{1}{2}, 4}_x(\R \times \R^4)$ is a \emph{critical element} if there exists a map $c:\R \to \R^4$ such that the orbit
\EQ{ \label{precomp}
 \{(u,N)(t,x+c(t)) \mid t\in\R\} \subset (H^1\times L^2)(\R^4)
}
is precompact.
\end{defn}

In this paper, we accomplish the final step in the Kenig-Merle argument, namely the rigidity:  there are no critical elements inside the potential well \eqref{eqn:pot well}. Thus we complete the proof of scattering below the ground state:

\begin{thm}[Scattering below ground state]  \label{thm:main}
Let $(f, g) \in (H^1 \times L^2)(\R^4)$ with
    \EQ{ E_Z(f,g) < E_Z(W,W^2), \pq \|g\|_{L^2}<\|W^2\|_{L^2}.}
There exists a unique global solution $(u, N) \in C(\R, (H^1\times L^2)(\R^4) )$ to \eqref{eqn:Zak1} with data $(u,N)(0)=(f,g)$ satisfying
		$\|  u \|_{L^2_t W^{\frac{1}{2}, 4}_x(\R^{1+4})} < \infty$, and
    \EQ{ \lim_{t\to \pm \infty} \| (e^{it\Delta} u, e^{-itD} N) - (f_{\pm}, g_{\pm}) \|_{H^1\times L^2} = 0 }
for some scattering states $(f_\pm, g_\pm) \in (H^1 \times L^2)(\R^4)$.
\end{thm}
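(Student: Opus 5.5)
By the reduction of \cite{C}, Theorem~\ref{thm:main} follows once we show that no critical element (Definition~\ref{defn:crit elements}) satisfies \eqref{eqn:pot well}. So the plan is to assume $(u,N)$ is a critical element inside the potential well and derive a contradiction. We first record the consequences of \eqref{eqn:pot well} and energy conservation. By the variational characterization of $W$, the well is invariant under the flow. Moreover there is $\de>0$ such that $\|N(t)\|_{L^2}\le(1-\de)\|W^2\|_{L^2}$ and the virial functional $K(u,N):=\int|\na u|^2-\re(N)|u|^2\,dx$ (the Zakharov analogue of $\|\na u\|^2-\|u\|_4^4$) satisfies $K(u(t),N(t))\gec \|\na u(t)\|_{L^2}^2+\|N(t)\|_{L^2}^2$ uniformly in $t$. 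This coercivity is also why we may assume the solution is nonzero. Precompactness modulo $c(t)$ gives uniform spatial localization: for every $\e>0$ there is $R$ with $\int_{|x-c(t)|>R}|\na u|^2+|u|^2+|N|^2\,dx<\e$ for all $t$, since mass is conserved and $u\in H^1$.

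We then split according to the motion of $c(t)$. Following the abstract, we use two arguments.

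\emph{Case 1: $c(t)$ is slow, i.e.\ $|c(t)|=o(t)$ along some sequence $t\to\I$.} Here we use a localized virial identity. Take $\fy_R(x)=R^2\fy(x/R)$ with $\fy=|x|^2$ near the origin, and set
\[
V(t)=\im\int \bar u\,\na\fy_R\cdot\na u\,dx+(\text{a wave correction term in }N,\ D^{-1}\im N).
\]
The wave correction is needed because $\p_t$ of the Schr\"odinger part produces $\int \re N\,\na\fy_R\cdot\na|u|^2$, which is not closed without the wave momentum $\int \re N\,\na\fy_R\cdot\na D^{-1}\im N$ (the standard Zakharov virial). The result should be $V'(t)=cK(u,N)+O(\e)$ whenever the support of the solution lies in $|x-c(t)|\ll R$. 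The trouble is that $N\in L^2$ only, so $D^{-1}N$ is not controlled at low frequencies. We would therefore truncate $D^{-1}$ to frequencies $\gec R^{-1}$, absorbing the remainder by compactness and Bernstein. Since $|V|\lec R\sup_t(\|u\|_{H^1}^2+\|N\|_2^2)$, integrating over $[0,T]$ with $R\approx \e T$ after recentering (allowed because $c$ is sublinear) gives $T\lec R$, which is a contradiction.

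\emph{Case 2: $c(t)$ moves linearly, $|c(t)|\gec t$.} The virial argument fails here. For NLS one would Galilean-boost to zero momentum, but the Zakharov system is not Galilean invariant, because the wave part has a fixed speed. We instead exploit that a compact profile travelling at velocity $v$ must be compatible with the wave: a solution of $i\p_tN+DN=D|u|^2$ with $|u|^2$ concentrated along $x=vt$ radiates unless $N\approx$ a traveling solution $N(x-vt)$. If $|v|>1$ no bounded traveling solution exists (Cherenkov radiation), which contradicts compactness. If $|v|<1$ the limiting profile (obtained via compactness) is a traveling wave solution of an elliptic system. We then rule out such traveling waves below the ground state by a Pohozaev/virial identity in the moving frame, again using $K>0$; for $|v|=1$ a degenerate radiation argument is needed. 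Concretely, we would prove a momentum-type monotonicity $\int \re N\,\p_j D^{-1}\im N$ combined with $\im\int\bar u\,\na u$ to force $v=0$ in the limit.

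I expect Case 2, and especially the uniform control of the wave component at low frequencies in all these identities, to be the main obstacle.
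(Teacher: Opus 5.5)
\textbf{There is a genuine gap: neither of your Case~2 mechanisms is a proof, and Case~1 rests on a case split that is not a dichotomy.}

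The first problem is your treatment of moving critical elements, which is the main difficulty here. In Case~2 you assume two things. First, that $c(t)$ has an asymptotic velocity $v$. Second, that compactness yields a traveling-wave profile solving an elliptic system. Neither follows from Definition~\ref{defn:crit elements}. The center $c(t)$ is an arbitrary function, and its average speed may oscillate between sub- and supersonic regimes. A limit point of $(u,N)(t_n,\cdot+c(t_n))$ is just another critical element and need not have the form $Q(x-vt)$. Since the Zakharov system has no Galilean or Lorentz invariance, you also cannot boost to $v=0$. So ``Pohozaev in the moving frame'' and ``momentum monotonicity forcing $v=0$'' are not arguments you can actually run.

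Your case split fails as well. The fixed-center virial of Case~1 needs $\sup_{t\in[0,T]}|c(t)|\ll R$ on the whole interval, not $|c(t_n)|=o(t_n)$ along a sequence. Moreover, ``$o(t)$ along a sequence'' versus ``$|c(t)|\gec t$'' is not a dichotomy.

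The paper (Theorem~\ref{thm:no time-like}) instead runs the virial around a \emph{moving} center $y(t)$, defined by the localized center of mass $\LR{X_R(x-y)|m(u)}=0$. This gives $M(u)\dot y=P_S(u)+o(1)$ and $\p_tV_y^R=2K_Z(e^{i\dot y\cdot x/2}u,b)-\dot y\cdot P_W+o(1)$. The key point you are missing is that the variational bound (Lemma~\ref{lem:var}) is uniform under $u\mapsto e^{i\x\cdot x}u$. It gives $2K_Z(e^{i\x\cdot x}u,b)\ge\de+\|\dot b\|_2\|\na b\|_2$, which absorbs $\dot y\cdot P_W$ whenever $|\dot y|\le 1$, because $|P_W|\le\|\dot b\|_2\|\na b\|_2$. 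Next, rotate so that the conserved $P_Z$ points along $\mathbf{e}_1$, and write $P_W=P_Z-P_S=P_Z-M\dot y+o(1)$. This yields $\p_tV_y^R\ge\de/2+F(\dot y_1)$ with $F(s)=\max_\pm(\pm1-s)(P_{Z,1}-Ms)$. Integrating against $|V_y^R|\lec R$ forces $|c(t_1)-c(t_0)|\ge(1+\e_0)|t_1-t_0|$ for all well-separated times. This one argument replaces your Case~1 and the entire $|v|\le1$ part of Case~2, with no limiting profile.

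A smaller point: your coercivity claim $K(u,N)\gec\|\na u\|_2^2+\|N\|_2^2$ is false for your $K$, which has no positive $N$-term. The correct functional is $2K_Z$, which contains $\frac12(\|\dot b\|_2^2+\|\na\cdot b\|_2^2)$.

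For supersonic motion, ``Cherenkov radiation'' is only a heuristic, and it is false without further structure. There are smooth compactly supported supersonic sources orthogonal to all free waves, for example $(\p_t^2-\De)\fy(x-c(t))$. The ingredient you are missing is the \emph{positivity} of the source $|u|^2$.
\begin{itemize}
\item Precompactness of $n$ modulo translations makes $|u|^2$ orthogonal to every free wave with $\cS_0$ data (Lemma~\ref{lem:transparent}).
\item Testing against the nonnegative singular free wave $w_*=D^{-3}\cos(tD)\de$, which is supported in $|x|\ge|t|$, is made rigorous by a Littlewood--Paley splitting plus Fatou. It gives $\iint_{|x|>|t|}|x|^{-1}(1-|t|/|x|)^{1/2}|u(t,x+c(0))|^2\,dx\,dt<\I$ (Theorem~\ref{thm:orth wave}).
\item This weight vanishes on the light cone, so the strict margin $1+\e_0$ from the virial step is essential. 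It places a fixed fraction of the mass at $|x|\sim t$ with $1-|t|/|x|\gec\e_0$, producing the divergent $\int^\I dt/t$.
\end{itemize}
So no separate ``degenerate'' sonic argument is needed, but your Case~2 as written does not supply a proof in either speed regime.
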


Strictly speaking, according to the above definition, $W\not\in L^2(\R^4)$ is not a critical element, but we may regard it as a limiting object that shows that scattering fails in general.
Our proof of the non-existence of critical elements inside the potential well relies crucially on the fact that $u(t)\in L^2_x$, and it is an interesting remaining question if one may drop it
as in the case of (energy-critical) NLS.  Another remaining question is the blow-up in the region where the energy remains below the ground state, but $\|g\|_{L^2}>\|W^2\|_{L^2}$. The grow-up is known in radial setting \cite{GN2}. Existence of finite-time blow-up solutions for the 4D Zakharov system has been recently proven in \cite{KT}.
While the blow-up in 2D was constructed in \cite{GM} decades ago, the 3D case is still an open problem.

In view of the results in \cite{C}, to prove Theorem \ref{thm:main} it suffices to show rigidity, namely the non-existence of critical elements inside the potential well \eqref{eqn:pot well}. The main difficulty is the lack of control over the spatial center $c(t)$. Indeed, if $c(0)=0$, then the virial argument from \cite{GN2} already rules out the existence of critical elements satisfying \eqref{eqn:pot well}. In the case of the non-radial NLS \cite{D}, or the nonlinear wave equation \cite{KM2},  there exists a center of mass or energy with velocity given by the conserved momentum. Together with the Galilei or Lorentz invariance, this gives control over the trajectory. However the Zakharov system has no such symmetries, and we do not know of any center of mass/energy with conserved velocity, see Remark \ref{rem:schr/wave vs Zak} below for further details. Thus controlling the center $c(t)$ is the major obstruction to applying the method used in the NLS case \cite{DHR}. On the other hand, the lack of good scaling symmetries for the Zakharov equation was heavily exploited in \cite{C} to preclude (or fix) the scaling parameter to leave only the `soliton like' critical elements. This is in strict contrast to the setting of the critical NLS where the main difficulty was controlling the scaling parameter \cite{KM,KV,D}.

To overcome the difficulty of the position parameter $c(t)$, we first preclude sub-sonic or time-like trajectories, namely the case of $|\dot c(t)|\le 1$ (in average for large time),
by refining the virial-variational estimates from the radial case \cite{GN2}. The super-sonic or space-like case $|\dot{c}(t)| \geq 1$ is precluded by a space-time integral estimate on positive source terms of the wave equation that do not radiate at time infinity. Namely we prove that if $w(t)$ is precompact modulo translations, and $\p_t^2 w - \Delta w = D^\alpha \rho$ with $\rho \geq 0 $ and $\alpha \ge 0$ then we have
        \EQ{ \sup_{(t_0, x_0) \in \R^{4+1}} \int_\R \int_{|x|>|t|} \tf{1}{|x|}(1-\tf{|t|}{|x|})^{(d-3)/2} \ro(t-t_0,x-x_0) dxdt \lec  \|\ro\|_{L^\I_tL^1_x}.  }
See Theorem \ref{thm:orth wave} for a more precise statement. The argument excluding space-like trajectories, as well as the above as the estimate, seems new and may be of independent interest. We hope it may prove useful in other problems as well.

\subsection{Outline} In Section \ref{sec:virial}, we exclude the time-like case $|\dot{c}(t)| \le 1$ by deriving virial-variational estimates around a moving center, together with a localization argument to define a suitable center of mass for the solution. In Section \ref{sec:orth wave}, we derive a space-time estimate for non-radiating wave sources. Finally in Section \ref{sec:proof}, we combine the previous two sections to exclude the existence of critical elements inside the potential well \ref{eqn:pot well}, thereby proving Theorem \ref{thm:main}.

\subsection{Notation} We conclude this section with some notation. The inner products are denoted by
\EQ{ \LR{a,b}:=\re(\bar a b), \pq \LR{f|g}:=\int_{\R^d}\LR{f(x),g(x)}dx.}
The Fourier multipliers are denoted by $\fy(D):=\F^{-1}\fy(|\x|)\F$.
$H^s$ and $\dot H^s$ denote the inhomogeneous and homogeneous $L^2$-Sobolev spaces on $\R^d$.
The $L^p(\R^d)$ norm is abbreviated as $\|\cdot\|_p$.
Open balls in $\R^d$ are denoted by $B_R(z):=\{x\in\R^d\mid |x-z|<R\}$.
We fix a smooth cut-off function $\La:\R\to\R$ and rescale it for $R>0$ as
\EQ{ \label{def cut-off}
 \La(t)=\CAS{1 &(|t|\le 1) \\ 0 &(|t|\ge 2)} \pq \La_R(t):=\La(\tf tR).}

\section{The virial argument for time-like trajectories} \label{sec:virial}
In this section, we apply a virial argument inside the potential well \eqref{eqn:pot well} to rule out the existence of time-like trajectories.
The conclusion is
\begin{thm}[No time-like critical elements]\label{thm:no time-like}
Let $(u, N)$ be a critical element with trajectory $c:\R\to\R^4$ such that the property \eqref{eqn:pot well} holds. There exist $\e_0, T_0>0$ such that for any $t_0,t_1\in\R$ satisfying $|t_0-t_1|\ge T_0$, we have
\EQ{ \label{Lip est on c}
  1 + \e_0 \le \tf{|c(t_0) - c(t_1)|}{|t_0-t_1|} \le \e_0^{-1}. }
\end{thm}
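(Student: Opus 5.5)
We prove the two inequalities in \eqref{Lip est on c} separately. The upper bound comes from finite speed of propagation and compactness. The lower bound $1+\e_0$ comes from a virial argument centred on a moving point.

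\textbf{Upper bound.} By precompactness of \eqref{precomp}, for every $\e>0$ there is $R(\e)$ such that
\[ \int_{|x-c(t)|>R(\e)}|\na u|^2+|u|^2+|N|^2\,dx<\e \]
for all $t$. Since $(u,N)\ne0$, the localized mass satisfies $\int_{B_R(c(t))}|u|^2\ge m_0>0$. The mass is conserved, and the local mass flux $\p_t|u|^2=-2\na\cdot\im(\bar u\na u)$ is bounded by the energy. So if $|c(t_0)-c(t_1)|\gg 1+|t_0-t_1|$, the mass in $B_R(c(t_0))$ could not have moved to $B_R(c(t_1))$. Concretely, we integrate the flux against a weight $\La_{R'}(|x-c(t_0)|)$ with $R'$ comparable to $|c(t_0)-c(t_1)|/2$; its change over $[t_0,t_1]$ is at most $C|t_0-t_1|/R'$. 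This gives $|c(t_0)-c(t_1)|\le C(R+|t_0-t_1|)$, which is the upper bound once $T_0$ is large. The same argument adjusts $c$ by bounded amounts, so we may assume $c$ is locally bounded and measurable.

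\textbf{Lower bound.} Argue by contradiction. Suppose there are $t_0<t_1$ with $t_1-t_0=T\ge T_0$ and $|c(t_1)-c(t_0)|\le(1+\e_0)T$. Choose a straight line $z(t)=c(t_0)+v(t-t_0)$ with $|v|\le 1+\e_0$ joining the two centres. Using the upper bound, we can first pass to a subinterval of length comparable to $T$ on which $|c(t)-z(t)|$ stays $o(T)$; this is done by comparing $c$ with secants on dyadic subintervals and choosing one where the average speed is near $|v|$.

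We then use the Galilean-type moving virial
\[ V(t)=\LR{i u|\,\psi_{R}(x-z(t))\cdot\na u}+\text{wave correction } \LR{N|\,\phi_R(x-z(t))\cdot\na D^{-1}\cdots}, \]
localized at scale $R\ll T$ around $z(t)$, as in the radial argument of \cite{GN2}. Its derivative equals the standard virial functional plus error terms. The standard functional is
\[ K(u,N)=\int 2|\na u|^2-\re N|u|^2\,dx+\text{wave momentum terms}. \]
The error terms are of two kinds: tail terms, which are small by precompactness, and terms from the motion of the centre, namely $v\cdot(\text{momentum density of }u)$ and $v\cdot(\text{wave momentum})$. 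For the Schr\"odinger part the momentum term is a total time derivative that can be absorbed. For the wave part we would combine $\p_t n$ and $\na n$ as in a Lorentz-boosted energy flux. The key variational estimate to aim for is that, inside the potential well \eqref{eqn:pot well} and for $|v|\le 1+\e_0$ with $\e_0$ small, the quadratic form
\[ \tf{1}{4}|N|^2 + v\cdot(\text{wave flux}) \]
remains coercive enough that $\tfrac{d}{dt}V\ge \de E_Z>0$. This relies on the variational gap $K\ge \de\|\na u\|_2^2$ below the ground state from \cite{GN2} and the strict margin $\|N\|_2<\|W^2\|_2$, which survives a small supersonic perturbation $v$.

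Integrating over the time interval gives $\de T\lec \sup|V|\lec R$ (plus $\|u\|_2$, which is why $u\in L^2$ is crucial). Since $R\ll T$, this is a contradiction once $T_0$ is large.

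The main obstacle is this last coercivity step. The boost term for the wave component at speed near $1$ degenerates, because the wave energy $|N|^2$ is barely controlled by its flux when $|v|\to1$. We first try to prove the estimate for $|v|\le1$ using the strict inequality coming from the well, and then extend it to $1+\e_0$ by continuity and compactness of the orbit.
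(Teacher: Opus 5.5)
\textbf{There is a genuine gap, in the lower bound.} Your upper bound via the localized mass flux is fine; it uses the same information the paper gets from $M\dot y=P_S+o(1)$.

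The failing step is the reduction to a subinterval of length comparable to $T$ on which $c$ stays within $o(T)$ of a straight line of speed at most $1+\varepsilon_0$. Endpoint information does not give this. Take $c(t)=K\min(t,T-t)\,\mathbf{e}_1$ on $[0,T]$ with $1+\varepsilon_0<K<\varepsilon_0^{-1}$, or a concatenation of such tents at ever larger scales.
\begin{itemize}
\item The net displacement is zero, so the theorem must exclude this path.
\item Yet on every subinterval $I$ the path deviates from every line of slope at most $1+2\varepsilon_0$ by $\gtrsim (K-1-2\varepsilon_0)|I|$.
\item Secants on dyadic pieces do not help. Their displacements sum to the total, but cancellation allows every piece to be supersonic.
\end{itemize}
A virial centred on a straight line cannot handle this path. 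Keeping the tail errors small would force $R\gtrsim KT$, and then $\sup|V|\lesssim R$ is compatible with growth $\delta T$. If instead you centre on the legs, the centre moves at speed $K>1$, and $v\cdot P_W$ can no longer be absorbed by $\|\dot b\|_2\|\nabla b\|_2$.

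Conversely, the step you single out as the main obstacle is harmless. Lemma~\ref{lem:var} gives $2K_Z(e^{iv\cdot x/2}u,b)\ge\delta+\|\dot b\|_2\|\nabla b\|_2$. Since $\|\dot b\|_2\|\nabla b\|_2$ is uniformly bounded, an excess $|v|-1\le\varepsilon_0\ll\delta$ is absorbed trivially.

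\textbf{The missing idea} is to tie the centre to the dynamics rather than to a chosen line.
\begin{itemize}
\item Take $y(t)$ to be the localized centre of mass, defined by $C^R_y(u)=0$. By Corollary~\ref{cor:cen} it stays within $R+L_0$ of $c(t)$ for all time and satisfies $M\dot y=P_S+o(1)$.
\item Conservation of $P_Z$ then turns the bad term into an explicit function of $\dot y$. After rotating so that $P_Z=P_{Z,1}\mathbf{e}_1$, one has $P_{W,1}=P_{Z,1}-M\dot y_1+o(1)$, and the transverse part of $\dot y\cdot P_W$ equals $-M|\dot y^\perp|^2+o(1)\le o(1)$.
\item This gives $\partial_t V^R_y\ge \delta/2+F(\dot y_1)$ with $F(s)=\max_\pm(\pm1-s)(P_{Z,1}-Ms)$.
\item Either $F\ge-\delta/4$, which happens when $P_{Z,1}\le M$ or $P_{Z,1}-M$ is small. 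Then $V^R_y$ grows linearly while $|V^R_y|\lesssim R$, a contradiction.
\item Or one uses $F(s)\ge(1-s)(P_{Z,1}-M)$. This is linear in $\dot y_1$, so it integrates to the net displacement $y_1(t_1)-y_1(t_0)$ and forces that displacement to exceed $(1+\varepsilon_0)(t_1-t_0)$ on every long interval.
\end{itemize}
Because the bound involves only the net displacement, it holds whatever the shape of the path, and it excludes the tents above directly.
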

The proof of Theorem \ref{thm:no time-like} relies on a virial argument. As this involves spatial weights, to ensure all quantities are well-defined in the energy space, we are eventually  forced to work with localized virial quantities. As non-local operators are technically inconvenient when using spatial localization, instead of the formulation \eqref{eqn:Zak1}, in this section we work with the Zakharov system in the form
\EQ{ \label{eqn:Zak b}
 \CAS{i\p_t u - \Delta u =  - u \na\cdot b,\\
  \p_t^2 b - \Delta b = \nabla |u|^2, \qquad \p_jb_k = \p_kb_j\ (j,k=1\etc d).}
}
which follows by taking $ n = - \na\cdot b$ and $b = D^{-2} \nabla n$. Clearly, if $(u, N)$ is a critical element, then we obtain a non-trivial solution $(u, b)$ to \eqref{eqn:Zak b} such that  the orbit
$\{ (u, b, \dot{b})(t,x+c(t))) \mid t\in \R \} \subset H^1 \times \dot{H}^1 \times L^2$ is precompact.
We also call $(u,b)$ a critical element.

\subsection{The virial identity}
Conservation of energy and momentum may be written in the divergence form for the Zakharov system \eqref{eqn:Zak b} as follows. For $(u,b)(t,x):\R^{1+d}\to\C\times\R^d$, define the Schr\"odinger mass/momentum/energy density as
    \EQ{  m(u):=\tf 12|u|^2, \qquad  p_S(u):=\LR{u,i\na u}, \qquad e_S(u):=\tf12|\na u|^2-\tf14|u|^4,}
the wave momentum/energy density
    \EQ{ p_W(b):=-\sum_{j=1\etc d}\LR{ \dot b_j,\na b_j}, \qquad e_W(b):=\sum_{j=1\etc d}\tf12(|\dot b_j|^2+|\na b_j|^2), }
and the Zakharov momentum/energy density
    \EQ{
        p_Z(u,b)&:=p_S(u)+p_W(b),\\
        e_Z(u,b)&:=\tf12[|\na u|^2+e_W(b)+|u|^2\na\cdot b] = e_S(u) + \tf14[|\dot b|^2+|\na\cdot b+|u|^2|^2].
    }
We define the corresponding mass, the energy and the momentum for the Zakharov system as
    \EQ{  M(u):=\int_{\R^d} m(u)dx, \quad  E_Z(u,b):=\int_{\R^d}e_z(u,b)dx,\quad P_Z(u,b):=\int_{\R^d}p_Z(u,b)dx,}
where, with a slight abuse of notation, provided that $n=-\na\cdot b$ and $N=n-iD^{-1}\dot n$, we have $E_Z(u,b)=E_Z(u,N)$ as defined in \eqref{def EZ}.
Similarly we define $E_S(u)$, $P_S(u)$ and $P_W(b)$ by integrating the corresponding density over $\R^d$.

If $(u,b)$ satisfies the Zakharov system \eqref{eqn:Zak b}  (in the classical sense) then we have the microscopic conservation laws
    \EQ{ 0 =\p_t m(u) + \na\cdot p_S(u)  &= \p_t p_Z(u,b)+\sum_{k=1}^d \p_k \up_{j,k}(u,b) \\
    &=\p_t e_Z(u,b) + \na\cdot \up_0(u,b)}
where we let
    \EQ{
        \up_0(u,b)&:=-\LR{\na u,\dot u}-\LR{m(u),\dot b}+\tf12 p_W(b),\\
      \up_{j,k}(u,b)&:=2\LR{\p_k u,\p_j u}+2\LR{m(u),\p_jb_k}+\LR{\p_kb,\p_jb} +\de_{j,k}[\tf{|\dot b|^2-|\na b|^2}{2}-\De m(u)].
    }
In  particular, the mass and the Zakharov energy and momentum are all conserved
    \EQ{ 0=\p_t M(u)=\p_t E_Z(u,b)=\p_t P_Z(u,b).}
Since $M$, $E_Z$, and $P_Z$ are continuous functionals in the energy space, their conservation is easily extended to the solutions in the energy class $(u,b,\dot b)\in C_t((H^1\times\dot H^1\times L^2)(\R^d))$  by the local well-posedness for $d\le 4$. On the other hand, the quantities $E_S(u)$, $P_S(u)$ and $P_W(b)$ are not conserved for solutions to \eqref{eqn:Zak b}.

To compute the virial identity for the solution $(u, b)$, we need another equation for the wave component, namely
\EQ{
  (\p_t^2-\De) m(b)=|\dot b|^2-|\na b|^2 - 2\LR{m(u),\na\cdot b} + \na\cdot\LR{2m(u),b}.}
We then define the virial identity for the Zakharov system in terms of the momentum
\EQ{
 \pt V_Z(u,b):=\LR{x|p_Z}-\tf{d-1}{2}\p_tM(b).}
Henceforth the integrals make sense only if the solution has enough decay for $|x|\to\I$. We later introduce a localization argument to extend the virial quantities to solutions in the energy class. A short computation using the above divergence identities gives the classical virial identity
\EQ{
  \dot V_Z(u,b) &=2\|u\|_{\dot{H}^1}^2+\tf12( \| \dot{b} \|_{2}^2 + \| \na\cdot b\|_{2}^2) + \tf{d+1}{2}\LR{  \na\cdot b ||u|^2} \\
                &= 2 K_S(u) + \tfrac{1}{2}( \| \dot b \|_2^2 + \| \na\cdot b + |u|^2 \|_2^2) + \tfrac{d-1}{2} \LR{  \na\cdot b + |u|^2 \mid |u|^2} \\
                &=:2K_Z(u,b)}
where
\EQ{
 K_S(u):=\|\na u\|_2^2-\tf d4\|u\|_4^4 = \LR{E_S'(u)| (x\cdot\na+\tf{d}{2})u},}
arises in the corresponding computation of the Schr\"odinger virial identity.

The above virial identity can be used to show that solutions cannot remain concentrated around the origin $x=0$. In particular, a version of the above played a critical role in the radial setting \cite{GN2}. However, in the non-radial setting, the critical element is potentially moving around some center, and thus we need to adapt the above virial identity to a moving trajectory.
To this end, consider a general (regular) trajectory $y:\R\to\R^d$. We eventually choose $y(t)$ to be a suitable center of mass of our critical element, but for now it is easier to simply work with a general trajectory. Define the translated center and virial as
\EQ{
  & C_S(u):=\LR{x|m(u)}, \quad C_y(u):=C_S(u)-y M(u)=\LR{x-y|m(u)},\\
  & V_y(u,b):=V_Z(u,b) - y P_Z(u,b) = \LR{x-y|p_Z}-\tf{d-1}{2}\p_tM(b).}
A computation shows that
    \EQ{
        \p_tC_y(u)=P_S(u)-\dot y M(u),}
and
    \EQ{
    \p_t V_y(u,b) &= 2K_Z(u,N) -\dot y\cdot P_Z \\
                    &= 2K_Z(e^{i\dot y x/2}u,N) + \dot y\cdot(P_S -\dot y M(u)) - \dot y \cdot P_W.}
In particular we have the identity
\EQ{ \label{eqn:mov vir}
 \p_t[V_y(u,b)-\dot y\cdot C_y(u)]
  =2K_Z(e^{i\dot y x/2}u,N)- \dot y \cdot P_W-\ddot y C_y(u),}
provided the solution has enough decay to define the integrals with weight $x$. These
identities also suggest the convenient definition of $y$ via
\EQ{ \label{eqn:y mot}
 C_y(u) = 0 \quad \iff \quad y=\tf{C_S(u)}{M(u)} \quad \implies \quad  \dot y=\tf{P_S(u)}{M(u)},}
in which case the virial identity \eqref{eqn:mov vir} is simplified to
\EQ{ \label{bd Vy simp}
 \p_t V_y(u,b)
  =2K_Z(e^{i\dot y x/2}u,N)- \dot y \cdot P_W.}

\begin{rem}\label{rem:schr/wave vs Zak}
As a digression, it is worth comparing the identity \eqref{bd Vy simp} with the corresponding identities in the Schr\"odinger and wave settings. Suppose that we have solutions $\phi$ and $\psi$ to
\EQ{
 (i\p_t-\De)\phi=|\phi|^2\phi, \pq (\p_t^2-\De)\psi=0.}
If $\phi$ and $\psi$ are (nice) solutions, then we have
\EQ{
 \pt C_S(\phi):=\LR{x|m(\phi)} \implies \p_t C_S(\phi)=P_S(\phi), \pq \p_t P_S(\phi)=0,
 \pr C_W(\psi):=\LR{x|e_W(\psi)} \implies \p_t C_W(\psi)=P_W(\psi), \pq \p_t P_W(\psi)=0.}
As in \eqref{eqn:y mot}, the corresponding centers of mass/energy  $y_S(t)$ and $y_W(t)$ may be defined, respectively, by
\EQ{
 \LR{x-y_S|m(\phi)}=0, \pq \LR{x-y_W|e_W(\psi)}=0}
which both satisfy the rather nice property that $\dot y_S=P_S/M$ and $\dot y_W=P_W/E_W$ are constant in time. Moreover, for the virial quantities $V_S:=\LR{x|P_S}$ and $V_W:=\LR{x|P_W}-\tf{d-1}{2}\p_tM(\psi)$, and the spatial translated versions $V_{S,y}:=V_S-y_SP_S$ and $V_{W,y}:=V_W-y_WP_W$, we have
\EQ{
 \p_tV_{S,y}=K_S(e^{i\dot y_Sx/2}\phi), \pq \p_tV_{W,y}=E_W-\tf{|P_W|^2}{E_W}.}
In fact, via the Galilei and Lorentz transforms, we may further reduce to the case of zero momentum $P_S=\dot y_S=0$ and $P_W=\dot y_W=0$, i.e. it is possible to reduce a general critical element to one essentially centered at the origin.

In the case of the Zakharov system, there does not seem to be a quantity like $C_S$ or $C_W$ with velocity controlled by $P_Z$.
As we have seen, the equation for $C_S(u)$ is the same as NLS, but $P_S(u)$ is no longer conserved. In contrast, $\p_t C_W(b)$ seems to be ill-defined in the energy space.
Moreover, the Zakharov system \eqref{eqn:Zak b} is not Galilei or Lorentz invariant and thus it seems difficult to reduce the Zakharov virial identity \eqref{bd Vy simp} any further.
\end{rem}

The second term $\dot{y} \cdot P_W$ in the virial identity \eqref{bd Vy simp} is new when compared to the radial setting (where $y=0$ suffices), and is a significant obstruction to extracting any positivity from the right hand side of \eqref{bd Vy simp}. However if $|\dot y|\le 1$ then the extra term $\dot{y} \cdot P_W$ appearing in \eqref{bd Vy simp} may be absorbed by $K_Z$ to prove monotonicity of $V_y$. The required positivity of $K_Z$ is a consequence of a variational bound.

\begin{lem}\label{lem:var}
Let $d=4$ and $\de>0$. If $f \in H^1(\R^4 ; \C)$ and $\nu \in L^2(\R^4 ; \C)$ satisfy
    \EQ{\label{eqn:pot well nu}
        4 E_{S}(f) + \| \nu  \|_2^2 + \delta^2 \le \| W \|_{\dot{H}^1}^2, \qquad \| \nu - |f|^2 \|_2 < \| W \|_4^2
    }
then we have the lower bound with $C_0:=\tf{4-\sqrt{13}}{2}>0$ for all $\x\in\R^4$
\EQ{ \label{est on KZ}
  2 K_S(e^{i\xi \cdot x} f) + \tf12 \| \nu \|_2^2 + \tf32 \LR{ \nu  \mid |f|^2 } \ge C_0\delta \| f \|_{L^4}^2 + (\| \re \nu\|_2 + \||f|^2\|_2) \| \im \nu\|_2.}
\end{lem}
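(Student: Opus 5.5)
The plan is to reduce \eqref{est on KZ} to an inequality among a few real numbers, and then settle that inequality using the sharp Sobolev inequality and the two constraints in \eqref{eqn:pot well nu}.

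\textbf{Step 1: removing $\x$.} Write $f=|f|e^{i\te}$ where $f\ne0$. Pointwise we have $|\na(e^{i\x\cdot x}f)|\ge|\na|f||$ a.e.; this is the diamagnetic inequality, and it holds for $H^1$ functions. Hence
\EQN{ K_S(e^{i\x\cdot x}f)\ge \|\na|f|\|_2^2-\|f\|_4^4=K_S(|f|). }
The same inequality gives $E_S(|f|)\le E_S(f)$, so the first hypothesis in \eqref{eqn:pot well nu} still holds with $f$ replaced by $|f|$. The remaining terms in \eqref{est on KZ}, and the second hypothesis, depend on $f$ only through $|f|^2$. It therefore suffices to prove \eqref{est on KZ} for $\x=0$ and $f\ge0$.

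\textbf{Step 2: the reduced inequality.} Write $\nu=a+ib$ with $a,b$ real, and introduce the numbers
\EQN{ X:=\|\na f\|_2^2,\quad y:=\|f\|_4^2,\quad \al:=\|a\|_2,\quad \be:=\|b\|_2,\quad \ga:=\LR{a|f^2}\in[-\al y,\al y],\quad A:=\|W\|_{\dot H^1}^2=\|W\|_4^4. }
In these variables the claim reads
\EQN{ 2X-2y^2+\tf12\al^2+\tf12\be^2+\tf32\ga\ \ge\ C_0\de y+(\al+y)\be. }
The hypotheses become
\EQN{ 2X-y^2+\al^2+\be^2+\de^2\le A, \qquad \al^2-2\ga+y^2+\be^2<A. }
We also have the sharp Sobolev inequality $y\le A^{-1/2}X$, with equality at $W$.

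\textbf{Step 3: eliminating $X$ and $\be$.} The left side minus the right side is increasing in $X$, so we substitute the smallest admissible $X$, namely $X=A^{1/2}y$. For the term $(\al+y)\be$ we use the first constraint, and the second constraint only when $\ga$ is negative, in order to bound $\be$ together with $\al$ and $y$. The mechanism is this: both constraints force $y$ (and $\be$) to stay strictly inside the well, below the level $y=A^{1/2}$ at which $W$ sits. Because of that, $2A^{1/2}y-2y^2$ controls $y$ times the gap $A^{1/2}-y$, and the energy constraint bounds that gap from below in terms of $\de$ and $\al^2+\be^2$. The term $\tf12(\al^2+\be^2)+\tf32\ga$ is then handled by completing squares. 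In the worst case $\ga=-\al y$, that is, $a$ antiparallel to $f^2$, the coefficient $\tf32$ competes with the quadratic terms. The constant $C_0=\tf{4-\sqrt{13}}{2}$ should appear as the smaller root of the resulting quadratic $C^2-4C+\tf34=0$, whose discriminant is $16-3=13$. That would make it exactly the constant for which the quadratic form in the normalized variables $(\al/y,\ \be/y)$ stays nonnegative after the term $C_0\de y$ has been extracted via $\de\le\sqrt{A-y^2}$ (up to constants). The first step to try is to normalize by $y$, set $s=y/A^{1/2}\in(0,1)$, and check nonnegativity of the resulting two-variable quadratic at the boundary of the constraint set.

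\textbf{Main obstacle.} I expect the hard part to be this final finite-dimensional optimization: choosing the right combination of the two constraints so that the cross term $(\al+y)\be$ is absorbed and the sharp constant $C_0$ emerges. Steps 1 and 2 are routine.
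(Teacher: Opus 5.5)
Your Steps 1 and 2 are correct. The diamagnetic reduction is a legitimate alternative to the paper's way of removing $\xi$: the paper instead notes that $K_S(e^{i\xi\cdot x}f)-K_S(f)=2[E_S(e^{i\xi\cdot x}f)-E_S(f)]$, so whenever the phase lowers $K_S$ the hypotheses transfer to $e^{i\xi\cdot x}f$. The gap is Step 3. It carries the entire content of the lemma, you only sketch it, and two of your heuristics for it are wrong.

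\textbf{Role of the second constraint.} It is not needed ``only when $\gamma$ is negative.'' Set $X=A^{1/2}y$ and $s:=\sqrt{\delta^2+\alpha^2+\beta^2}$. The first constraint then reads $(A^{1/2}-y)^2\ge s^2$, which also allows the outer branch $y\ge A^{1/2}+s$. On that branch the claim is false: with $\nu=0$ (so $\gamma=0$), the left side is $2y(A^{1/2}-y)<0$ while the right side is positive. So this branch must be excluded for every sign of $\gamma$, and the second constraint does exactly that through the triangle inequality $y=\||f|^2\|_2\le\|\nu-|f|^2\|_2+\|\nu\|_2<A^{1/2}+s$. Hence $A^{1/2}-y\ge s$; the relevant relation is this one, not $\delta\le\sqrt{A-y^2}$. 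Equivalently, $K_S(f)\ge y(A^{1/2}-y)\ge\|f\|_4^2\sqrt{\delta^2+\|\nu\|_2^2}$, which is precisely the variational input the paper imports from [GN2, Lemmas 6.1--6.2].

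\textbf{Origin of $C_0$.} $C_0$ is not a sharp threshold, so searching for a boundary configuration where a quadratic form in $(\alpha/y,\beta/y)$ degenerates will not produce it. Your quadratic $C^2-4C+\tfrac34$ is just reverse-engineered from the given root. In the paper, with $t:=\|\nu\|_2$, one uses $\gamma\ge-\alpha y$ to get
$\tfrac12(\alpha^2+\beta^2)+\tfrac32\gamma-(\alpha+y)\beta\ge\tfrac12(\alpha-\beta)^2-\tfrac12(3\alpha+2\beta)y\ge-\tfrac{\sqrt{13}}{2}\,yt$,
the last step being Cauchy--Schwarz with the vector $(3,2)$. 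One then splits $2K_S=\tfrac{\sqrt{13}}{2}K_S+C_0K_S$, bounding the first part with $K_S\ge yt$ and the second with $K_S\ge y\delta$. So $C_0=2-\tfrac{\sqrt{13}}{2}$ is an artifact of this splitting. Minimizing $2\sqrt{\delta^2+t^2}-\tfrac{\sqrt{13}}{2}t$ over $t\ge0$ instead gives $\tfrac{\sqrt3}{2}\delta$, so the inequality even holds with $\tfrac{\sqrt3}{2}>C_0$ in place of $C_0$, and no extremal configuration will single out $C_0$.

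With these two ingredients your reduced inequality closes in a few lines. Without them, the proposal does not yet prove the lemma.
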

\begin{proof}
The variational estimates \cite[Lemmas 6.1--6.2]{GN2} with \eqref{eqn:pot well nu} and $N:=|f|^2-\nu$ yield
\EQ{ \label{est on f}
   K_S(f) \ge \| f \|_4^2  ( \delta^2 + \| \nu\|_2^2)^{\frac{1}{2}} \ge  \| f \|_4^2  \max(\delta,\| \nu\|_2), }
which is extended to all $\x\in\R^4$
        \EQ{\label{eqn:inf K_S(f)}
         K_S(e^{i\xi \cdot x} f) \ge \| f \|_4^2  \max(\delta,\| \nu\|_2).
        }
Indeed, it is obvious if $K_S(e^{i x \cdot \xi} f) \ge K_S(f)$.
On the other hand,  if $K_S(e^{i\xi \cdot x} f) < K_S(f)$ then $E_S(e^{i\xi \cdot x} f) < E_S(f)$ and thus $\tilde{f} := e^{i \xi \cdot x} f$ satisfies the energy constraint \eqref{eqn:pot well nu}, so we may replace $f$ with $\tilde f$ in \eqref{est on f}.
To conclude the proof, we take $\nu_1 = \re{\nu}$ and $\nu_2 = \im{\nu}$ and observe that
    \EQ{
         \pt \| \nu \|_2^2 + 3 \LR{ \nu  \mid |f|^2 } - 2 (\| \nu_1\|_2 + \||f|^2\|_2) \| \nu_2 \|_2
                \prq\ge \big( \| \nu_1 \|_2 - \| \nu_2 \|_2\big)^2  -  (3\|\nu_1\|_2 + 2\|\nu_2\|_2)\|f\|_4^2.
    }
As $3\|\nu_1\|_2 + 2\|\nu_2\|_2 \le \sqrt{13}\|\nu\|_2$ (by Cauchy-Schwarz on $\R^2$) we then have
    \EQ{ \| \nu \|_2^2 + 3 \LR{ \nu  \mid |f|^2 } \ge 2 \| \nu_1 - |f|^2\|_2 \| \nu_2 \|_2 - \sqrt{13} \| f\|_4^2 \| \nu\|_2. }
Applying \eqref{eqn:inf K_S(f)} with the lower bound $\|f\|_4^2\|\nu\|_2$ for $\tf{\sqrt{13}}{2}K_S$ and $\|f\|_4^2\de$ for the rest,
we obtain the desired estimate.
\end{proof}

Using the lemma with $\nu := \na\cdot b +|u|^2 + i |\dot{b}|$ and $|P_W| \le \| \dot{b} \|_{2} \| \nabla b \|_2 = \| \dot{b} \|_2 \| \na\cdot b \|_2$
in \eqref{bd Vy simp}, we obtain
    \EQ{ \p_t[V_y(u,b)] \ge C_0\delta \| u \|_4^2 + (1-|\dot{y}|) \|  \dot{b} \|_2 \| \na\cdot b\|_2. }
Provided $|\dot{y}|\le 1$, we would then conclude that $V_y(u,b)$ is increasing, which (provided all quantities are well defined!) would exclude the possibility of time-like trajectories. Although there are still some complications to overcome, this observation largely forms the heart of the proof of Theorem \ref{thm:no time-like}.

\subsection{Localized center and virial}
The above virial argument is merely formal, since $C_y$ and $V_y$ are not well defined in the energy space. In particular, to exploit the virial identity \eqref{bd Vy simp} and the center of mass \eqref{eqn:y mot}, we need to localize the integrals to (large) spatial balls.

Let $\chi:\R\to\R$ be odd, smooth, bounded including the derivatives, and strictly increasing with $\chi(x)=x$ for $|x|\le 2$.
Define $\chi_R(x):=\chi(\tf{x}{R})$ for $R>0$ and $x\in\R$, and let $X_R(x):=R(\chi_R(x_1)\etc \chi_R(x_d))$ for $R>0$ and $x\in\R^d$.
We consider $X_R(x)$ with $R\to\I$ as bounded approximation of $x$ in $C_y$ and $V_y$, namely
\EQ{\label{eqn:implicit setup}
 \pt C_y^R(u):=\LR{X_R(x-y)|m(u)},
  \pr V_y^R(u,b):=\LR{X_R(x-y)|p_Z(u,b)}-\tf{d-1}{2}\LR{\La_{2R}(|x-y|) b|\dot b},}
which are continuous functionals on the energy space $(u,b,\dot b)\in(H^1\times \dot H^1\times L^2)(\R^d)$ for $d=3,4$. With these definitions in hand, given $R>0$ we define the trajectory $y(t):\R \to \R^d$ as the solution to
\EQ{\label{eqn:traj}
C_y^R( u) = 0.
}
The condition \eqref{eqn:traj} uniquely defines the trajectory $y\in C^1(\R;\R^d)$. Indeed, if we let $(u,b)$ be any fixed solution in the energy space with $M(u)>0$, and take
    \EQ{\label{eqn:defn f} f(t,s):=\LR{X_R(x_j-s)|m(u(t))},}
then  the existence of $y_j(t)$ solving $f(t,y_j(t))=0$ at any $t\in\R$ follows from the intermediate value theorem, as $f(t,s) \to\mp R\chi(\I)M(u)$ as $s\to\pm\I$. Moreover, by the implicit function theorem, $y_j(t) \in C^1(\R;\R)$ is the unique solution to \eqref{eqn:defn f} since the derivatives
    \EQ{ f_s(t,s) = - \LR{ \chi'_R(x_j - s) | m(u)} <0, \qquad f_t(t,s) = \LR{X_R'(x_j-s)|p_{S,j}(u)} }
are continuous for $(t,s)\in\R^2$.

Thus we obtain the unique solution $y\in C^1(\R;\R^d)$ of \eqref{eqn:traj}
(depending on $u$ and $R$) and moreover
\EQ{\label{eqn:loc mass/mom+e}
 0 = \p_t C_y^R(u) \pt=-\LR{\dot y\cdot\na X_R(x-y)|m(u)}+\LR{\na X_R(x-y)|p_S(u)}
  \pr=P_S(u)-\dot yM(u) + \err_C^R,}
with the error term $\err_C^R$ defined by $\err_C^R=(\err_{C,1}^R\etc\err_{C,d}^R)$ with
\EQ{\label{eqn:error C}
 \err_{C,j}^R=\LR{\chi'(\tf{x_j-y_j}{R})-1|p_{S,j}(u)-\dot y_j m(u)},}
for $j=1\etc d$. Similarly
\EQ{\label{eqn:loc vir+e}
 \pt\p_t V_y^R(u,b) = 2K_Z(u,b)-\dot y\cdot P_Z + \err_V^R = 2K_Z(e^{i\dot y \cdot x/2} u, b) - \dot{y} \cdot P_W(b) + \err_V^R,}
where we have
\EQ{\label{eqn:error V}
\err_V^R \pt:=\sum_{j=1\etc d}\LR{\chi'(\tf{x_j-y_j}{R})-1|\up_{j,j}(u,b)-\dot y_j p_{Z,j}(u,b)}
 \prq+\tf{d-1}{2}\LR{(1-\La_{2R}(|x-y|))||\dot b|^2-|\na b|^2+b\cdot\na |u|^2}
  \prq+\tf{d-1}{2}\sum_{j=1\etc d}\LR{\La'(\tf{|x-y|}{2R})\tf{x-y}{|x-y|}b_j|\dot y\dot b_j+\na b_j},
}
which is also valid for the solutions in the energy space for $d=3,4$.

To make the above error terms small enough, we need to ensure that the critical element $(u, b)$ remains uniformly (in $t\in \R$) concentrated around $y(t)$ defined by \eqref{eqn:traj}. This is not immediately obvious due to the unboundedness of $C_y^R$ as $R\to\I$, and requires measuring the `tightness' of the solution along the trajectory $y(t)$. To this end, given $\fy \in L^1_x(\R^d)$ we define the minimal radius to include most of the mass
(with $\e$-loss)
\EQ{
R(\fy,\e):= \inf\Big\{R>0 \,\,\Big| \,\, \inf_{z\in \R^d} \|\fy\|_{L^1(|x-z|>R)} \le \e\Big\}.
}
We then have the following more general statement.

\begin{lem}\label{lem:cen exist}
Let $\eta  :\R\to\R$ be odd, bounded and strictly increasing. Let $d\in\N$, $0\le \fy\in L^1(\R^d)$ with $\|\fy\|_1>0$.
Then there exists a unique $y:(0,\I)\to\R^d$ such that
\EQ{\label{eqn:defn h}
 \LR{\eta(\tf{x_j-y_j(R)}{R})|\fy}=0 \pq(j=1\etc d),}
Moreover, for any $0<\e<C_\y\|\fy\|_1$ and $R \ge 2(d+1) R(\fy, \e)$ we have
                        \EQ{ \label{eqn:tight 1}
                        \int_{|x-y(R)|>R}\fy(x)dx \le \e,
                        }
where $C_\y:=\eta(\frac{1}{2d})/(\eta( \frac{1}{2d}) + \eta(\infty))>0$.
\end{lem}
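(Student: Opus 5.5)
Existence and uniqueness reduce coordinatewise to one variable. For fixed $R>0$ and $j$, set
\[
g_j(s):=\LR{\eta(\tf{x_j-s}{R})|\fy}.
\]
Since $\eta$ is bounded and $\fy\in L^1$, dominated convergence shows that $g_j$ is continuous in $s$. It tends to $\mp\eta(\I)\|\fy\|_1$ as $s\to\pm\I$, because $\eta$ is odd and increasing, so $\eta(\pm\I)=\pm\eta(\I)$ with $\eta(\I)>0$.

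Strict monotonicity of $\eta$ makes $g_j$ strictly decreasing. For $s<s'$ we have $\eta(\tf{x_j-s}{R})>\eta(\tf{x_j-s'}{R})$ pointwise, and $\fy\ge0$ with $\|\fy\|_1>0$ makes the integrated difference positive. The intermediate value theorem then gives a unique zero $y_j(R)$.

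For the tightness bound \eqref{eqn:tight 1}, fix $\e$ and $R\ge 2(d+1)R(\fy,\e)$. Choose $r$ slightly larger than $R(\fy,\e)$, with $2(d+1)r\le R$ up to an arbitrarily small slack handled by a limiting argument, and choose $z$ such that $\|\fy\|_{L^1(|x-z|>r)}\le\e$.

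The goal is to show $|y_j-z_j|\le R/(2d)$ for each $j$. Suppose instead that $y_j-z_j>R/(2d)$. On the ball $|x-z|\le r$ we then have
\[
\tf{x_j-y_j}{R}\le \tf{r}{R}-\tf1{2d}\le \tf1{2(d+1)}-\tf1{2d}<0 .
\]
This is where the factor $2(d+1)$ is tuned, so I will aim for the quantitative statement that $(x_j-y_j)/R$ lies below a fixed negative number there; I expect to use a threshold slightly different from $1/(2d)$ and adjust the constants. Splitting $0=g_j(y_j)$ into the ball and its complement gives
\[
0\le -\eta(c)\,\|\fy\|_{L^1(B_r(z))}+\eta(\I)\,\e ,
\]
with $\|\fy\|_{L^1(B_r(z))}\ge\|\fy\|_1-\e$. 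Rearranging yields $\|\fy\|_1\le \e\,(\eta(c)+\eta(\I))/\eta(c)$. This contradicts $\e<C_\y\|\fy\|_1$ once $c=\tf1{2d}$, matching the definition of $C_\y$. The symmetric case $y_j-z_j<-R/(2d)$ is handled the same way.

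The main obstacle is this bookkeeping of constants: ensuring the margin of $(x_j-y_j)/R$ on the ball is at least $1/(2d)$. If the margin with $2(d+1)$ is insufficient, I would instead bound $|y_j-z_j|\le R/(2d)+r$ and use a weaker threshold. In any case, $|y-z|\le\sum_j|y_j-z_j|\le R/2$, so any $x$ with $|x-y|>R$ satisfies $|x-z|>R/2\ge r$. Therefore
\[
\int_{|x-y|>R}\fy\,dx\le\|\fy\|_{L^1(|x-z|>r)}\le\e ,
\]
which is \eqref{eqn:tight 1}; the large factor $2(d+1)$ leaves room to absorb the choice of $r$.
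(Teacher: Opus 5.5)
Your existence and uniqueness argument matches the paper's and is fine. The gap is in the tightness step, exactly at the constant bookkeeping you flag. With the displacement threshold $|y_j-z_j|\le \frac{R}{2d}$, the margin on $B_r(z)$ is only $\frac{1}{2d}-\frac{r}{R}\ge\frac{1}{2d(d+1)}$. Your contradiction therefore needs $\e<\frac{\eta(c)}{\eta(c)+\eta(\infty)}\|\fy\|_1$ with $c=\frac{1}{2d(d+1)}$. Since $\eta(c)<\eta(\frac{1}{2d})$ and $t\mapsto\frac{t}{t+\eta(\infty)}$ is increasing, this is strictly stronger than the hypothesis $\e<C_\eta\|\fy\|_1$, so you cannot simply set $c=\frac{1}{2d}$.

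The fallback you mention in passing is the right one; it is the paper's proof. But you never carry it out, and it is incompatible with the bound $|y-z|\le\frac R2$ you then use ``in any case''. Test $h=z_j+r+\frac{R}{2d}$. For $|x-z|\le r$ one has $\frac{x_j-h}{R}\le-\frac{1}{2d}$, hence $g_j(h)\le-\eta(\frac{1}{2d})(\|\fy\|_1-\e)+\eta(\infty)\e<0$, exactly because $\e<C_\eta\|\fy\|_1$. So $y_j<h$ by monotonicity, and symmetrically $y_j>z_j-r-\frac{R}{2d}$.

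The margin is now exactly $\frac{1}{2d}$, and the price moves to the last step. Now $|y-z|<dr+\frac R2$, so $|x-y|\ge R$ only gives $|x-z|>\frac R2-dr$. This is $\ge r$ precisely when $R\ge 2(d+1)r$. The factor $2(d+1)$ is entirely consumed here, contrary to your remark that it ``leaves room to absorb the choice of $r$''.

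For the same reason, choosing $r$ slightly larger than $R(\fy,\e)$ fails in the borderline case $R=2(d+1)R(\fy,\e)$. Your unspecified limiting argument would need extra input, e.g. continuity of $R\mapsto y(R)$. It is cleaner to observe that the infimum defining $R(\fy,\e)$ is attained. If $r_n\downarrow R(\fy,\e)$ and $\int_{|x-z_n|>r_n}\fy\,dx\le\e$, then $\int_{|x-z_n|\le r_1}\fy\,dx\ge\|\fy\|_1-\e>0$ keeps $(z_n)$ bounded. Along a subsequence $z_n\to z$, and Fatou's lemma gives $\int_{|x-z|>R(\fy,\e)}\fy\,dx\le\e$. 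You can then take $r=R(\fy,\e)$ directly, which is what the paper's ``unpacking the definition'' implicitly does.
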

\begin{proof}
Let $\ro_j(h):=\LR{\eta(\tf{x_j-h}{R})|\fy}$ for $h\in\R$.
Since $\y$ is strictly increasing, $\fy\ge 0$ and $\|\fy\|_1>0$, $\ro_j(h)$ is strictly decreasing
with $\ro_j(h)\to\mp\eta(\I)\|\fy\|_1$ as $h\to\pm\I$. Hence the intermediate value theorem yields the unique $y_j\in\R$ such that $\ro_j(y_j)=0$.
We now turn to the tightness bound. Let $0<\e<C_\y\|\fy\|_1$ and $R_* := R(\fy, \e)$. Unpacking the definition of $R_*$ gives $z^* \in \R^d$ such that
        \EQ{ \label{eqn:z tight}
        \int_{|x-z^*|> R_*} \fy(x) dx \le \e. }
Since $\eta$ is increasing and odd, we have via the tightness bound \eqref{eqn:z tight}
\begin{align*}
    \rho_j(z^*_j + R_* + (2d)^{-1} R) &\le \eta( \tfrac{-1}{2d}) \int_{|x-z^*|\le R_*} \fy(x)dx + \eta(\infty) \int_{|x-z^*|> R_*} \fy(x) dx \\
    &\le - \eta( \tfrac{1}{2d}) ( \| \fy \|_1 - \e) + \eta(\infty) \e < 0,
\end{align*}
since $\e < C_\y\|\fy\|_1$.
The same argument yields $\rho_j(z^*_j - R_* - (2d)^{-1} R)>0$ thanks to the odd symmetry, and hence by definition of $y(R)$,
\EQ{
 |y_j(R)-z^*_j| < R_*+\tf{R}{2d}}
for all $j=1\etc d$.
Then provided $|x-y(R)| \ge R \ge  2(d+1) R_*$ we see that
    $$ |x-z^*| \ge |x-y(R)| - |z^* - y(R)| \ge R - d R_* - \tf12 R \ge  R_*.$$
Consequently the tightness bound \eqref{eqn:tight 1} follows from \eqref{eqn:z tight}.
\end{proof}

It is tempting to take the limit $R\to\I$ in the definition of $y(t)$ for the critical element,
but it seems in general impossible, as the tightness implies only $y(t) = c(t) + o(R)$  as $R\to \infty$. On the other hand, the solution $(u, b)$ remains precompact along $y(t)$ for each fixed $R>0$. This is not on its own sufficient to deal with the error terms \eqref{eqn:error C} and \eqref{eqn:error V}, as we require control of the error as $R\to \infty$. It is at this step where the uniform nature of Lemma \ref{lem:cen exist} is crucial. In particular, provided that $(u, b)$ is a critical element, we can now conclude via the identities \eqref{eqn:loc mass/mom+e} and \eqref{eqn:loc vir+e} that the modified center $C_y^R$ and virial $V^R_y$ essentially satisfy the same identities as their unmodified counterparts.

\begin{cor}[Localized center/virial]\label{cor:cen}
Assume that $(u, b)$ is a critical element that is precompact along the trajectory $c(t):\R \to \R^d$. There exists $L_0>0$ such that for every $R\ge L_0$ we have a unique solution $y \in C^1(\R;\R^d)$ to \eqref{eqn:traj}. Moreover
$\sup_{t\in\R}|y(t)-c(t)|\le R+L_0$ and
        \EQ{ \label{eqn:loc V/C}
            M(u) \dot{y} &= P_S(u) + o(1), \\
            \p_t V^R_y(u, b)&= 2 K_Z( e^{i \dot{y} \cdot x/2} u, b) - \dot{y} \cdot P_W(b) + o(1),
        }
where the $o(1)$ terms decay to zero as $R\to \infty$ uniformly in time.
\end{cor}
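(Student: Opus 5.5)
We apply Lemma \ref{lem:cen exist} at each fixed time with $\fy:=m(u(t))$, $\eta:=\chi$ and $d=4$. By conservation, $\|\fy\|_1=M(u)>0$; note $M(u)>0$, since otherwise $u\equiv0$ and the critical element scatters. This gives existence and uniqueness of $y(t)$ for every $R>0$. The $C^1$ regularity was already obtained from the implicit function theorem argument after \eqref{eqn:defn f}.

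\textbf{Uniform tightness.} The key input is uniformity in $t$. Precompactness of $\{(u,b,\dot b)(t,\cdot+c(t))\}$ in $H^1\times\dot H^1\times L^2$ gives, for every $\e>0$, a radius $L(\e)$ with
\[
\sup_t\int_{|x-c(t)|>L(\e)}\bigl(|u|^2+|\na u|^2+|u|^4+|\dot b|^2+|\na b|^2\bigr)dx\le\e .
\]
The $|u|^4$ term is handled by Sobolev embedding on the compact set. Hence $R(m(u(t)),\e)\le L(\e)$ uniformly in $t$. Fix $\e_1<C_\chi M(u)$ and set $L_0:=10L(\e_1)$. For $R\ge L_0$, the proof of Lemma \ref{lem:cen exist} gives $|y_j(t)-z^*_j|<R_*+R/8$ with $z^*$ any near-optimal center. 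We take $z^*=c(t)$, which is admissible with $R_*=L(\e_1)$. This yields $|y(t)-c(t)|\le 2(R_*+R/8)\le R+L_0$ for all $t$.

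\textbf{Smallness of the error terms.} For general $\e\to0$, set $R_\e:=10L(\e)$; for $R\ge R_\e$ we have $|y-c|\le R/4+2L(\e)$. The same tightness then holds for all energy densities, not only the mass, outside $B_{R}(y(t))$ (at least outside $B_{R/2}(y)$), since that region lies outside $B_{L(\e)}(c(t))$. Now observe that in \eqref{eqn:error C} and \eqref{eqn:error V} every integrand is supported where $|x_j-y_j|\ge 2R$, using $\chi'=1$ on $[-2,2]$, or where $|x-y|\ge 2R$, using the cut-off factors $1-\La_{2R}$ and $\La'$. In \eqref{eqn:error C} the integrand is bounded by $|u||\na u|+|\dot y||u|^2$. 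By Cauchy--Schwarz, $\err_C^R=O(\e)(1+|\dot y|)$, so
\[
M(u)\dot y=P_S(u)+O(\e)(1+|\dot y|).
\]
Since $|P_S|\le\|u\|_2\|\na u\|_2$ is uniformly bounded, for $\e$ small this first gives $|\dot y|\lec 1$ uniformly, and then $\err_C^R=o(1)$. This establishes the first line of \eqref{eqn:loc V/C}.

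For $\err_V^R$: the $\up_{j,j}$ and $\dot y_jp_{Z,j}$ terms are quadratic in the energy densities (plus the $m(u)\p_jb_k$ term, handled by H\"older with $L^4$, and the $\De m(u)$ term, bounded using $|u||\De u|$ or better by integration by parts putting the derivative on $\chi'$, which gains $1/R$). All are $O(\e)$ on the exterior region. The term $b\cdot\na|u|^2$ and the last sum involve $b$ itself, which is not in $L^2$. For these we use the $\La'$ factor supported on $2R\le|x-y|\le4R$ together with Hardy's inequality / Sobolev ($\dot H^1\subset L^4$ in $d=4$). This gives $\|b\|_{L^2(|x-y|\sim R)}\lec R\|b\|_{L^4(|x-y|\sim R)}$, compensated by the $1/R$ derivative of the cut-off. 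Likewise $\int(1-\La_{2R})|b||u||\na u|\le\|b\|_4\|u\|_4\|\na u\|_{L^2(|x-y|>2R)}$. Moreover $\|b\|_{L^4(|x-y|>R)}$ is small by precompactness of $\na b$ modulo translation, via a localized Sobolev inequality. With $|\dot y|\lec1$ this gives $\err_V^R=o(1)$ uniformly.

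\textbf{Main obstacle.} I expect the main difficulty to be the handling of $b$ in $L^2$-type pairings: we only control $\na b$, not $b$. The plan is to localize $b$ by subtracting nothing and using $\dot H^1\subset L^4$ together with Hölder on annuli of volume $\sim R^4$. A secondary subtlety is the circularity with $|\dot y|$, which is resolved by the a priori bound derived above.
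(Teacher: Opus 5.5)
Your proposal is correct and follows essentially the paper's route. Both arguments use uniform tightness from precompactness and Lemma~\ref{lem:cen exist} to keep $y(t)$ within $R+L_0$ of $c(t)$. Both then observe that every error term is supported where $|x-y|\ge 2R$, and use Hardy/Sobolev to control the undifferentiated $b$. The differences are minor:

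\begin{itemize}
\item You rerun the lemma's proof with $z^*=c(t)$. The paper instead combines the lemma's conclusion with a mass-overlap argument.
\item You make explicit the bootstrap $\sup_t|\dot y|\lec 1$ from the first identity, which the paper leaves to ``a short computation''.
\end{itemize}

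One small correction: for the $\De m(u)$ term, only the integration by parts onto $\chi'$ works. The bound via $|u||\De u|$ is unavailable for $u\in H^1$.
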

\begin{proof}
The unique existence of $y\in C^1$ has already been checked below \eqref{eqn:traj}.
On the other hand, as $(u,b)$ is precompact along the trajectory $c(t)$, for each $\epsilon>0$ there exists $L(\epsilon)>0$ such that
        \EQ{\label{eqn:ub tight}
            \sup_{t\in \R} \int_{|x-c(t)|>L(\epsilon)} |u|^2 + |\nabla u |^2 + |\dot{b}|^2 + |\nabla b|^2 dx < \epsilon.
        }
In particular, this implies that $L(\epsilon) \ge R(|u(t)|^2, \epsilon)$ for all $t\in\R$. Hence for any $R\ge 2(d+1) L(\epsilon)$, Lemma \ref{lem:cen exist} gives
       \EQ{ \int_{|x-y(t)|\ge R} |u|^2 dx < \epsilon}
and hence we must have
       \EQ{ \int_{B_R(y(t))} |u|^2 + \int_{B_{L(\epsilon)}(c(t))} |u|^2 > 2\|u\|_2^2-2\epsilon.}
This forces $B_R(y(t)) \cap B_{L(\epsilon)}(c(t)) \not = 0$ for $\epsilon\le\|u\|_2^2/2$, which means $|y(t) - c(t)| < R + L(\epsilon)$.
Thus, choosing $L_0\ge 2(d+1)L(M(u))$ ensures $|y(t)-c(t)|<R+L_0$.

To conclude the tightness propery, we observe that the bound \eqref{eqn:ub tight} together with $R>2L(\epsilon)$ implies
        \EQ{\label{eqn:ub tight y}
            \sup_{t\in \R} \int_{|x-y(t)|\ge 2R} |u|^2 + |\nabla u |^2 + |\dot{b}|^2 + |\nabla b|^2 dx < \epsilon.
        }
In other words the solution $(u,b)$ remains concentrated along the trajectory $y(t)$. Moreover, recalling the definitions \eqref{eqn:error C} and \eqref{eqn:error V} we see that
        \EQ{ |\err_C^R| \lec (1+|\dot{y}(t)|) \ \int_{|x-y(t)| \ge 2R} |u|^2 + |\nabla u |^2 dx \lec{}(1+|\dot{y}(t)|) \epsilon}
and, using the Hardy inequality for $b\in\dot H^1(\R^d)$,
        \EQ{ \err_V^R &\lec (1+|\dot{y}(t)|)  \int_{|x-y(t)| \ge 2R} |u|^2 + |\nabla u|^2 + |u|^2 |\nabla b| + |\dot{b}|^2 + |\nabla b|^2dx \\
        &\lec (1+|\dot{y}(t)|)  \epsilon.  }
provided  $d=3,4$. A short computation using the identities \eqref{eqn:loc mass/mom+e} and \eqref{eqn:loc vir+e} then gives the claim \eqref{eqn:loc V/C}.
\end{proof}

\subsection{Proof of Theorem \ref{thm:no time-like}}

We are now ready to give the proof of Theorem \ref{thm:no time-like}.

\begin{proof}[Proof of Theorem \ref{thm:no time-like}]
Given the critical element $(u,N)$, let $b:=D^{-2}\na\re N$, so that $(u,b)$ is a critical element for \eqref{eqn:Zak b}.
Then the energy constraint \eqref{eqn:pot well} allows us to apply Lemma \ref{lem:var} to $(f,\nu)=(u, |u|^2 + \na\cdot b  + i |\dot{b}|)$.
Since $\|u(t)\|_2=\|u(0)\|_2>0$, the precompactness of $u(t,x+c(t))$ in $L^2_x$ implies
 $\inf_{t\in \R} \| u(t) \|_4 >0$. In view of the variational bound \eqref{est on KZ}, we then have $\delta>0$ such that for all $t\in \R$
        \EQ{ 2K_Z(e^{i\dot y\cdot x/2}u, b) -\| \dot{b} \|_2 \| \na\cdot b\|_2  \ge \delta.}
Combining this with the second estimate of \eqref{eqn:loc V/C} (and recalling that $\p_jb_k=\p_kb_j$) we conclude that
        \EQ{ \p_t V^R_y \ge \delta + \| \dot{b} \|_2 \| \nabla b\|_2 - \dot{y} \cdot P_W + o(1). }
Applying a rotation in $\R^4$ if needed, and noting that the Zakharov momentum $P_Z$ is conserved, it is enough to consider the case where $P_Z = P_{Z, 1} \mathbf{e}_1$ points only in the $\mathbf{e}_1$ direction and $P_{Z,1}\ge 0$. Using the first estimate of \eqref{eqn:loc V/C} and taking $y^\bot := y - y_1 \mathbf{e}_1$, we can write
       \EQ{
        \dot y \cdot P_W = \dot{y}_1 P_{W, 1} + \dot y^\bot \cdot ( P_Z - P_S) &= \dot{y}_1 P_{W, 1} - \dot y^\bot \cdot ( M(u) \dot y + o(1)) \\
                            &\le \dot{y}_1 P_{W, 1} + o(1).
        }
Combining the above bounds, and choosing $R>1$ sufficiently large, we obtain
\EQ{ \label{loc vir bd}
 \pn\p_t V_y^R
 \pt\ge  \de+ \| \dot{b} \|_2 \|\nabla b\|_2 - \dot{y}_1 P_{W, 1} +  o(1)
 \pr\ge  \de + \max_\pm(\pm 1-\dot y_1)P_{W,1} + o(1)
 \pr=  \de + \max_\pm(\pm 1-\dot y_1)(P_{Z,1} - \dot{y}_1 M)+ o(1)
 \pr\ge \de/2 + F(\dot y_1),}
where we define $F(s):=\max_\pm(\pm 1-s)(P_{Z,1}-Ms)$. If $P_{Z,1}\le M$ then $F\ge 0$ so $V_y^R(t)$ is (strictly) increasing, which contradicts the bound $|V_y^R| \lec R$. On the other hand if $P_{Z,1}>M$, then we use
\EQ{
 F(s) \pt\ge (1-s)(P_{Z,1}-Ms)
  \pn\ge
    \CAS{-\tf{(P_{Z,1}-M)^2}{4M^2},\\
     (1-s)(P_{Z,1}-M).}  }
The uniform bound for $F(\dot y_1)$ implies monotonicity of $V_y^R$ when $P_{Z,1}-M<M\sqrt{\de}$.
Otherwise, applying the second bound to $F(\dot y_1)$ above, and integrating the resulting inequality in time,
we obtain for any $t_0<t_1$,
\EQ{
  [V_y^R]_{t_0}^{t_1}
  \ge  (\de/2+P_{Z,1}-M)(t_1-t_0)-(P_{Z,1}-M)(y_1(t_1)-y_1(t_0)). }
Since the left side is uniformly bounded by $R$, we conclude:
$P_{Z,1}-M\ge M\sqrt{\de}$ and,
after fixing $R>1$ large enough for \eqref{loc vir bd},
there is $L\sim R/\de$ such that for all $t_0,t_1\in\R$,
\EQ{ \label{TLC}
  t_1-t_0 \ge L \implies 1+\tf{\de}{3(P_{Z,1}-M)} \le \tf{y_1(t_1)-y_1(t_0)}{t_1-t_0},}
 while $\dot y=P_S/M+o(1)$ gives a uniform upper bound. Potentially taking $L$ slightly larger, Corollary \ref{cor:cen} then transfers this bound to the original trajectory $c(t)$.
\end{proof}

\section{Space-time estimate for non-radiating positive sources}\label{sec:orth wave}

In this section, we derive a space-time estimate
to exclude space-like trajectories, exploiting the positivity and precompactness of the source $|u|^2$ in the wave equation. Namely we focus on the linear wave equation
for $\psi:=(-\De)^{-1}n$
\EQ{
  (\p_t^2-\De)\psi=|u|^2,}
ignoring the Schr\"odinger equation for $u$. More precisely, we use only that $|u|^2\ge 0$ belongs to the following function space:
\EQ{ \label{def sM}
 \pt \sT:=\{\ro\in L^\I_t L^1_x(\R^{1+d}) \mid \forall\fy\in\cS_0(\R^d),\ \LR{ e^{\pm i t D} \fy \mid \rho}_{t,x} = 0 \},}
where $\cS_0$ denote the subspace of Schwartz class $\cS$ vanishing at the frequency $0$:
\EQ{
 \cS_0(\R^d) \pt:=\{\fy\in\cS(\R^d) \mid \forall \al\in\N_0^d,\ \LR{x^\al|\fy}=0\}
 \pr=\{\fy\in\cS(\R^d) \mid \forall\al\in\N_0^d,\ \p_\x^\al\F\fy(0)=0\}. }
The restriction to $\cS_0$ is crucial for our purpose, since $N\in C(\R;L^2(\R^4))$ by itself does not allow to define $(-\De)^{-1}n$ in $\cS'(\R^4)$,
because of the singularity at $\x=0$ in the Fourier space.
We could make $\sT$ bigger by replacing $L^1_x$ with the space of bounded Borel measures, without essentially changing the following arguments,
but we stay within $L^1$ to avoid unnecessary distraction.
The goal of this section is
\begin{thm}[Space-time bound on non-radiating sources]\label{thm:orth wave}
For every $d\ge 4$, there is $C_d\in(0,\I)$ such that for any non-negative $\ro\in\sT$ we have for all $(t_0,x_0)\in\R^{1+d}$
\EQ{ \label{ext est}
  \iint_{|x|>|t|} \tf{1}{|x|}(1-\tf{|t|}{|x|})^{(d-3)/2} \ro(t-t_0,x-x_0) dxdt \le C_d\|\ro\|_{L^\I_tL^1_x}. }
\end{thm}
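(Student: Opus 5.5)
The plan is to exploit that $\ro\in\sT$ is orthogonal to all free waves. The first step is to make this quantitative. Fix $(t_0,x_0)=(0,0)$ by translation invariance of $\sT$ and of the norm $\|\ro\|_{L^\I_tL^1_x}$. Then build a kernel $K(t,x)$ with three properties:
\begin{enumerate}
\item $K$ equals (a positive multiple of) $\tf1{|x|}(1-\tf{|t|}{|x|})^{(d-3)/2}$ on the exterior $\{|x|>|t|\}$;
\item $K$ differs from a superposition of free waves $e^{\pm itD}\fy$ by a function $G$ that is harmless against $\ro$;
\item $G$ is supported in the interior cone region, or is otherwise bounded in $L^1_tL^\I_x$.
\end{enumerate}
Concretely, I would start from the difference of the advanced and retarded fundamental solutions, $E_+-E_-$, which solves the free wave equation (its Fourier transform in $x$ is $\sin(t|\x|)/|\x|$). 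I would then look for a Lorentz-invariant, homogeneous distribution $K$ of degree $-1$ in $(t,x)$ whose $x$-Fourier transform, at each fixed $t$, is a combination of $e^{\pm it|\x|}$ times functions of $|\x|$. The natural candidate is a multiple of $(|x|^2-t^2)_+^{(d-3)/2}|x|^{-(d-2)}$, which reduces exactly to the weight in \eqref{ext est} outside the cone. The corrective part supported inside the cone should come out as a function of $|t|$ alone, or at least of $|t|$-homogeneity $-1$ with bounded $x$-profile. The dimension restriction $d\ge4$ should enter here, guaranteeing that the exponent $(d-3)/2$ is positive and the kernel is locally integrable.

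Granting such a decomposition, the argument is short. Formally,
\EQN{\LR{K|\ro}_{t,x}=\LR{G|\ro}_{t,x},}
because the free-wave part pairs to zero with $\ro$ by the definition of $\sT$. Since $\ro\ge0$, the exterior part of $\LR{K|\ro}$ is exactly the quantity in \eqref{ext est}. The remaining term is bounded by $\|G\|_{L^1_tL^\I_x}\|\ro\|_{L^\I_tL^1_x}$, or, if $G$ is only homogeneous of degree $-1$ in $t$, by splitting $|t|\le 1$ and $|t|\ge1$ together with a sign argument that uses positivity again. Should $G$ fail to be integrable in $t$ at infinity, I would instead keep $G\le0$ in the interior so that it can simply be discarded; choosing the sign of the free-wave correction to achieve this is part of the construction.

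The main obstacle is rigor in the pairing step. Neither $K$ nor the free-wave corrector is a Schwartz function with vanishing low frequencies, while $\sT$ only guarantees orthogonality to $e^{\pm itD}\fy$ with $\fy\in\cS_0$. To handle this I would regularize: replace $K$ by $K_{\e,\la}:=P_{\e,\la}K\,\La_{1/\e}(t)$, where $P_{\e,\la}$ is a smooth frequency cut-off to $\la\le|\x|\le\e^{-1}$ (so that the free-wave part becomes an admissible $\cS_0$ test function for each $t$, integrated against a bounded time weight). I would then check two things:
\begin{enumerate}
\item the error from the time truncation and the frequency cut-offs tends to zero against $\ro\in L^\I_tL^1_x$, using the explicit decay of the kernels;
\item $K_{\e,\la}\to K$ pointwise on the exterior, so that Fatou's lemma together with $\ro\ge0$ passes the bound to the limit.
\end{enumerate}
The low-frequency limit $\la\to0$ is where $d\ge4$ should be genuinely needed, since it controls the singularity of $|\x|^{-1}$-type symbols at $\x=0$ in $L^1_x$-duality. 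I expect this limiting argument, together with verifying the sign and integrability of the interior part $G$, to take most of the work.
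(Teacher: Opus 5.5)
Your overall idea of testing $\ro$ against a free wave that reproduces the weight outside the cone is the right one, and your candidate kernel is better than you realize. Up to a constant, $K=(|x|^2-t^2)_+^{(d-3)/2}|x|^{2-d}$ is exactly the free wave $w_*=D^{1-d}\cos(tD)\de$, with data $c_d|x|^{-1}$ and zero velocity. It vanishes identically inside the cone, so no interior corrector $G$ is needed. On the exterior it dominates the weight in \eqref{ext est}, since $1-t^2/|x|^2\ge 1-|t|/|x|$.

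This exposes a genuine gap in your limiting argument. If your step (1) held, i.e.\ if the frequency cut-off errors paired against $\ro$ tended to zero, you would conclude $\LR{w_*|\ro}_{t,x}=0$. That is false. The static source $\ro(t,x)=\mu(x)$ with $0\le\mu\in L^1$ lies in $\sT$, yet gives $\LR{w_*|\ro}_{t,x}\sim\|\mu\|_1>0$. The positive quantity lives entirely in the low frequencies, where $w_*$ fails to be an $\cS_0$-wave. Indeed, $\|P_{<\la}w_*(t)\|_{L^\I_x}$ decays only like $|t|^{-1}$ for $|t|\gg\la^{-1}$. So for every $\la>0$, $\LR{P_{<\la}w_*|\ro}_{t,x}$ is neither small nor even absolutely bounded by $\|\ro\|_{L^\I_tL^1_x}$. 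There is a second problem: Fatou's lemma needs nonnegative approximants, and $P_{\e,\la}K$ is not nonnegative.

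The missing idea is to tie the time cut-off to the \emph{lower} frequency cut-off (your $\La_{1/\e}$ is tied to the upper one), and to aim for a uniform bound rather than a vanishing error.

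\begin{itemize}
\item Restrict to the slab $|t|<2^{-j}$ and split $w_*=P_{<j}w_*+P_{\ge j}w_*$.
\item \emph{Low part.} Since $w_*(t)\in L^{d,\I}_x$ uniformly in $t$, we have $\|P_{<j}w_*(t)\|_{L^\I_x}\lec 2^j$. On a slab of length $2^{1-j}$ this costs only $O(\|\ro\|_{L^\I_tL^1_x})$.
\item \emph{High part.} Mollify by $0\le\chi\in\cS$, so that $\chi*P_{\ge j}w_*=\cos(tD)D^{1-d}P_{\ge j}\chi$ is a free wave with $\cS_0$ data. Orthogonality then turns its slab integral into minus its integral over $|t|>2^{-j}$.
\item There, dispersive decay $\|P_kw_*(t)\|_{L^\I_x}\lec 2^k(2^k|t|)^{(1-d)/2}$ gives $\|P_{\ge j}w_*\|_{L^1_{|t|>2^{-j}}L^\I_x}\lec\sum_{k\ge j}2^{(j-k)(d-3)/2}\lec 1$, uniformly in $j$. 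This step, not the low-frequency limit, is where $d\ge 4$ is used.
\end{itemize}

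Together these give
\[
\sup_{j\in\Z}\int_{|t|<2^{-j}}\int_{\R^d}\ro\,\chi*w_*\,dx\,dt\lec\|\ro\|_{L^\I_tL^1_x}\|\chi\|_{L^1}.
\]
Since $\ro\,\chi*w_*\ge 0$, monotone convergence as $j\to-\I$, followed by Fatou as $\chi\to\de$, yields the theorem. Both regularizations used here, sharp time slabs and nonnegative mollifiers, preserve positivity, which your frequency truncation does not.
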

The above bound looks somewhat similar to the Morawetz estimate, but it is for non-radiating source rather than dispersive waves. In fact, the integral is infinite in the case $\ro=|u|^2$ for a free Schr\"odinger wave $u$ with $C_0^\I(\R^d)$ initial data, due to the asymptotic formula $|u|^2\sim t^{-d}|\hat u(0,Cx/t)|^2$ as $t\to\I$.

Obviously, $\sT$ is invariant under space-time translations, spatial rotations, reflections in time, the rescaling $\ro\mapsto \la^{d}\ro(\la t,\la x)$ for $\la>0$,
and the Lorentz transforms (which is easily seen in the space-time Fourier transform).
It is also closed if $d\ge 4$, since the free waves in $\cS$ are included in $L^1_t L^\I_x$.
On the other hand, $\sT$ is non-trivial, containing all time-like traveling sources
\EQ{
 \sT\supset\{\mu(x-ct) \mid \mu\in L^1(\R^d),\ c\in\R^d,\ |c|<1\}, }
since the free waves $w=e^{\pm itD}\fy$ with $\fy\in\cS_0(\R^d)$
may be integrated in time, namely $w=\p_t(\mp ie^{\pm itD}D^{-1}\fy)$
with $D^{-1}\fy\in\cS_0(\R^d)$, hence $\mu(x)\in\sT$
and it is extended to $|c|<1$ by the Lorentz transform.
Note that the left side of \eqref{ext est} becomes equivalent to the right side in this case.
On the other hand, \eqref{ext est} immediately precludes the case of $|c|\ge 1$ for any non-trivial non-negative $\mu\in L^1_x$.

Another subset of $\sT$ is given by the following, which is more relevant for our use.
\begin{lem} \label{lem:transparent}
Let $\ro\in L^\I_tL^1_x(\R^{1+d})$, $\al\ge 0$, $s\in\R$, $c:\R\to\R^d$, and $w\in C(\R;H^s(\R^d))$ satisfy
$(\p_t^2-\De)w = D^\al \ro$ and $\{w(t,x+c(t))\}_{t\in\R}\subset H^s(\R^d)$ is precompact. Then $\ro\in\sT$.
\end{lem}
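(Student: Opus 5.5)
The plan is to move the wave operator and the multiplier $D^\al$ from $\ro$ onto the test wave, using the equation $(\p_t^2-\De)w=D^\al\ro$ in the sense of distributions. A time cut-off $\La_T$ localizes the pairing, and its derivatives produce boundary terms that vanish by precompactness and dispersion. By reflection in time and complex conjugation it suffices to treat $e^{itD}$.

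\emph{Step 1: the test wave.} Fix $\fy\in\cS_0(\R^d)$ and put $\psi:=D^{-\al}\fy$. Since $\F\fy$ vanishes to infinite order at $\x=0$, we have $\psi\in\cS_0(\R^d)$, and $D^{\al}\psi=\fy$. Let $v(t):=e^{itD}\psi$, which is a smooth free wave with $(\p_t^2-\De)v=0$. For $T>0$ the function $\La_T(t)v(t,x)$ is smooth, compactly supported in $t$ and Schwartz in $x$, so it is an admissible test function for the distributional equation. Hence
\EQ{
 \int_\R \La_T(t)\LR{e^{itD}\fy\mid \ro(t)}dt = \int_\R\La_T\LR{v\mid D^\al\ro}dt
 = \int_\R\LR{(\p_t^2-\De)(\La_T v)\mid w}dt .
}
Using $(\p_t^2-\De)v=0$, the right side equals
\EQ{
 \int_\R \LR{\La_T'' v+2\La_T'\p_t v\mid w(t)}dt .
}
Here the pairing is the $H^{-s}\times H^s$ duality. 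This is legitimate because $v(t),\p_tv(t)$ are bounded in every $H^{\si}$ uniformly in $t$.

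\emph{Step 2: the left side converges.} As $T\to\I$ the left side tends to $\LR{e^{itD}\fy\mid\ro}_{t,x}$, which is how the pairing in \eqref{def sM} is understood. For $d\ge4$ this convergence is absolute, since $\|e^{itD}\fy\|_{L^\I_x}\lec\LR{t}^{-(d-1)/2}\in L^1_t$ and $\ro\in L^\I_tL^1_x$. In lower dimensions I would define the pairing as this limit.

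\emph{Step 3: the boundary terms vanish.} We have $|\La_T''|\lec T^{-2}$ and $|\La_T'|\lec T^{-1}$, both supported in $T\le|t|\le2T$. The $\La_T''$ term is therefore $O(T^{-1})\sup_t\|w(t)\|_{H^s}$, which tends to $0$. For the $\La_T'$ term it suffices to show
\EQ{
 \lim_{|t|\to\I}\LR{\p_t v(t)\mid w(t)}=0 .
}
Write $w(t)=\ta_{c(t)}f(t)$ with $f(t):=w(t,\cdot+c(t))$ ranging over a compact set $K\subset H^s$. Then
\EQ{
 \LR{\p_tv(t)\mid w(t)}=\LR{\ta_{-c(t)}\,e^{itD}iD\psi\mid f(t)} .
}
Given $\e>0$, cover $K$ by finitely many $H^s$-balls of radius $\e$ centered at Schwartz functions $g_1\etc g_m$. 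The pairing is then bounded by
\EQ{
 \e\sup_t\|D\psi\|_{H^{-s}}+\max_k\|e^{itD}D\psi\|_{L^\I_x}\|g_k\|_{L^1}.
}
The first term is $O(\e)$. The second tends to $0$ as $|t|\to\I$ by the dispersive decay of the free wave with Schwartz data $D\psi\in\cS_0$. Crucially, this bound is uniform in the translation $c(t)$, because the $L^\I$ norm is translation invariant. Thus the $\La_T'$ term is at most $T^{-1}\cdot T\cdot o(1)\to0$.

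Combining the three steps gives $\LR{e^{\pm itD}\fy\mid\ro}_{t,x}=0$ for all $\fy\in\cS_0$, that is, $\ro\in\sT$.

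The main obstacle I expect is Step 3. One needs the decay of $\LR{\p_tv(t)\mid w(t)}$ uniformly along an arbitrary, possibly supersonic, trajectory $c(t)$. This is exactly where precompactness is used: it allows approximation by finitely many $L^1$ profiles, and $L^\I$ dispersive decay then kills the translation. The rest consists of routine justifications, namely that $\La_T v$ is a valid test function and that $D^{-\al}$ preserves $\cS_0$.
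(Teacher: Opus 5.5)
Your proof is correct and follows the paper's argument. Both test the equation against $\La_T e^{itD}D^{-\al}\fy$, move $\p_t^2-\De$ onto the free wave, and kill the resulting boundary terms $\La_T''v+2\La_T'\p_t v$ using precompactness modulo translations together with $L^\I$ dispersive decay of free waves with $\cS_0$ data. The only differences are cosmetic: you bound the $\La_T''$ term crudely by $O(T^{-1})$ instead of via decay, and you write out the finite $\e$-net argument that the paper leaves implicit.
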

As the proof shows, $D^\al$ may be generalized to Fourier multipliers $m(D)$ for any $m\in C(\R^d,\C)$ such that $1/m$ is smooth except $0$ with at most polynomial growth at $0$ and at infinity.
\begin{proof}
For any $\fy\in\cS_0(\R^d)$, let $v:=e^{itD}\fy$.
Since $v\in L^1_tL^\I_x$, we have
\EQ{
 \LR{v|\ro}_{t,x}=\lim_{R\to\I}\LR{\La_Rv|\ro}_{t,x},}
where $\La_R$ is the smooth cut-off in \eqref{def cut-off},
and the right side is equal to
\EQ{
 \pt\LR{\La_Rv|\ro}_{t,x}=\LR{\La_R D^{-\al}v|(\p_t^2-\De)w}_{t,x}
 =\LR{(\p_t^2-\De)(\La_R D^{-\al}v)|w}_{t,x}
 \pr=\LR{\La_R''v_0+2\La_R'v_1|\LR{D}^{-s}w}_{t,x}, \pq  v_j:=i^jD^{j-\al}\LR{D}^sv.}
Then $v_j$ are also free waves in $\cS_0$, while $\LR{D}^{-s}w(t,x+c(t))$ is precompact in $L^2_x$.
Since $\|v_j(t)\|_{L^2_x}$ is bounded and $\|v_j(t)\|_{L^\I_x}\to 0$ as $|t|\to\I$,
we deduce that $\LR{v_j|\LR{D}^{-s}w}_x\to 0$ as $|t|\to\I$.
Since $\supp\La_R^{(j)}\subset[-2R,-R]\cup[R,2R]$ and $|\La_R^{(j)}|\lec R^{-j}$ for $j=1,2$,
we conclude that $\LR{\La_Rv|\ro}_{t,x}\to 0$ as $R\to\I$.
\end{proof}

The above lemma produces many examples in $\sT$, such as $(\p_t^2-\De)\fy(x-c(t))$ for any $\fy\in H^s(\R^d)$, $s\in\R$ and any $c\in C^2(\R;\R^d)$.
In particular, there are many super-sonic traveling sources in $\sT$ that are smooth and compactly supported.
However, Theorem \ref{thm:orth wave} implies that those cannot be non-negative if $c(t)$ is space-like (for long time).

The proof of Theorem \ref{thm:orth wave} follows by simply testing against a rather singular self similar free wave.
\begin{proof}[Proof of Theorem \ref{thm:orth wave}]
Let $w_*$ be the singular free wave
\EQ{
 w_*(t,x) := D^{1-d}\cos(tD) \de = \tf{c_d}{|x|}(1-\tf{t^2}{|x|^2})_+^{(d-3)/2} \in L^{1,\I}_t L^\I_x \cap L^\I_t L^{d,\I}_x,}
where $c_d>0$ is some constant, and $L^{p,\I}$ denotes the Lorentz (or weak Lebesgue) space.
Note that $w_*(t,x)=\la w_*(\la t,\la x)$ for all $\la>0$.
Let  $P_j$ be a standard Littlewood-Paley decomposition to the frequencies $|\x|\sim 2^j$ for $j\in\Z$, and $P_{\ge j}:=\sum_{k\ge j}P_k$, and $P_{<j}:=\sum_{k<j} P_k$, for $j\in\Z$. Since $P_jw_*=2^j P_0w_*(2^jt,2^jx)$ is a free wave in $\cS_0(\R^d)$, the dispersive decay yields for $d\ge 4$ and uniformly for all $j,k\in\Z$,
    \EQ{
 \|P_kw_*\|_{L^1_{|t|>2^{-j}}L^\I_x} \lec 2^{k}\int_{2^{-j}}^\I (2^kt)^{(1-d)/2} dt \sim 2^{(j-k)(d-3)/2}.}
In particular, we have
\EQ{
  \|P_{\ge j}w_*\|_{L^1_{|t|>2^{-j}}L^\I_x} \lec \sum_{k\ge j}  2^{(j-k)(d-3)/2} \lec 1,}
uniformly for $j\in\Z$. On the other hand, for small frequencies, we have
\EQ{
  \|P_{<j}w_*(t)\|_{L^\I_x} \lec  2^j\|w_*(t)\|_{L^{d,\I}_x}  \lec 2^j.}
We now let $\chi \in \cS(\R^d)$ with $\chi \ge 0$. As $ \chi * P_{\ge j} w_* = \cos( t D) D^{1-d} P_{\ge j} \chi$ is clearly a free wave with data in $\cS_0$, the orthogonality to $\ro$ implies that
        \EQ{ \int_\R \int_{\R^d} \rho \chi * P_{\ge j} w_* dx dt = 0 }
and hence
       \EQ{
         &\int_{|t|<2^{-j}} \int_{\R^d} \rho \chi*w_* dx dt \\
         &= \int_{|t|<2^{-j}} \rho \chi*P_{<j}w_* dx dt - \int_{|t|>2^{-j}} \rho \chi*P_{\ge j} w_* dx dt.
        }
Applying the above bounds for $w_*$, we conclude that
\EQ{
 \sup_{j\in\Z} \int_{|t|<2^{-j}}\int_{\R^d} \ro \chi*w_* dxdt \lec \|\ro\|_{L^\I_t L^1_x} \| \chi \|_{L^1_x}.}
As the integrand is non-negative, by the Monotone Convergence Theorem we see that for any $\chi \in \cS(\R^d)$ with $\chi\ge 0$ we have
   \EQ{ \int_\R \int_{\R^d} \rho \chi * w_* dx dt \lec \| \rho \|_{L^\infty_t L^1_x} \| \chi \|_{L^1_x}. }
To conclude the proof, we take $\chi$ to be an approximation to the identity, thus $\chi_\epsilon = \epsilon^{-d} \phi( \epsilon^{-1} x)$ with $\phi \ge 0$, $\int_{\R^d} \phi = 1$, and $\phi \in \cS(\R^d)$. As we have the pointwise a.e. limit $\lim_{\epsilon \to 0} (\chi_\epsilon * w_*)(t,x) = w_*(t,x)$, Fatou's lemma gives
        \EQ{ \int_\R \int_{\R^d} \rho w_* dx dt \le \liminf_{\epsilon \to 0} \int_\R \int_{\R^d} \rho \chi_\epsilon * w_* dx dt \lec \| \rho \|_{L^\infty_t L^1_x}. }
\end{proof}

\section{Extinction of the critical elements} \label{sec:proof}
The proof of Theorem \ref{thm:main} is now straightforward by a contradiction argument.
We apply Theorem \ref{thm:no time-like} to exclude time-like trajectories, and Theorem \ref{thm:orth wave} to rule out space-like trajectories.

\begin{proof}[Proof of Theorem \ref{thm:main}]
If the conclusion fails, then \cite[Theorem 1.6]{C} yields a critical element $(u,N)$ such that \eqref{eqn:pot well} holds.
Then Theorem \ref{thm:no time-like} yields $\e_0,T_0>0$ such that  we have \eqref{Lip est on c} for all $t_0,t_1\in\R$ with $|t_0-t_1|\ge T_0$.
Applying Lemma \ref{lem:transparent} to $\ro=|u|^2$ with $w=-n$, $\al=2$ and $s=0$, we see that
$|u|^2 \in \sT$ and hence Theorem \ref{thm:orth wave} implies
\EQ{ \label{eqn:u2 morawetz}
 \iint_{|x|>|t|} \tf{1}{|x|}(1-\tf{|t|}{|x|})^{(d-3)/2} |u(t,x+c(0))|^2 dxdt <\I.
}
Moreover, since $u(t, x+c(t))$ is precompact in $L^2_x$, we have $R>0$ such that
\EQ{ \label{mass bound}
  \int_{|x+c(0)-c(t)|<R} |u(t,x+c(0))|^2 dx \ge M(u)}
for all $t\in\R$.
For $t \ge \max(T_0,2R/\e_0)$ and $|x+c(0)-c(t)|<R$ we have by \eqref{Lip est on c}
\EQ{
 |x|  \in |c(t)-c(0)|+(-R,R) \pt\subset t(1+\e_0,1/\e_0)+(-R,R) \pr\subset t(1+\e_0/2,1/\e_0+\e_0/2)}
 and hence injection of \eqref{mass bound} into \eqref{eqn:u2 morawetz} implies
           \EQ{ \int_{\max(T_0,2R/\e_0)}^\I \frac{dt}{t} < \infty, }
which is a contradiction.
\end{proof}

\end{document}